\newtheorem{theorem}{Theorem}
\newtheorem{proposition}[theorem]{Proposition}
\newtheorem{lemma}[theorem]{Lemma}
\newtheorem{corollary}[theorem]{Corollary}
\theoremstyle{remark}
\newtheorem{remark}{Remark}
\newtheorem*{definition}{Definition}
\newcommand{\vep}{\varepsilon}
\newcommand{\R}{{\mathbb{R}}}
\newcommand{\Q}{{\mathbb{Q}}}
\newcommand{\C}{{\mathbb{C}}}
\newcommand{\Z}{{\mathbb{Z}}}
\newcommand{\N}{{\mathbb{N}}}
\newcommand{\Arg}{\operatorname{Arg}}
\begin{document}
\title[Density of mild mixing property for  Abelian
differentials]{Density of mild mixing property for vertical flows
of Abelian differentials}
\author[K. Fr\k{a}czek]{Krzysztof Fr\k{a}czek}
\address{Faculty of Mathematics and Computer Science, Nicolaus
Copernicus University, ul. Chopina 12/18, 87-100 Toru\'n, Poland}
\address{Institute of Mathematics\\
Polish Academy of Science\\
ul. \'Sniadeckich 8\\
00-956 Warszawa, Poland } \email{fraczek@mat.uni.torun.pl}
\date{}

\subjclass[2000]{Primary: 37A10, 37E35; Secondary: 30F30}

\keywords{Mild mixing property, measure--preserving flows,
direction flows, Abelian differentials}
\thanks{Research partially supported by MNiSzW grant N N201
384834 and Marie Curie "Transfer of Knowledge" program, project
MTKD-CT-2005-030042 (TODEQ)}

\begin{abstract}
We prove that if $g\geq 2$ then the set of all Abelian
differentials $(M,\omega)$ for which the vertical flow is mildly
mixing is dense in every stratum of the moduli space
$\mathcal{H}_g$. The proof is based on a sufficient condition in
\cite{Fr-Le-Le}  for special flows over irrational rotations and
under piecewise constant roof functions to be mildly mixing.
\end{abstract}
\maketitle

\section{Abelian differentials and direction flows}
For every natural $g\geq 2$ let $\mathcal{H}_g$ stand for the
moduli space of equivalence classes of pairs $(M,\omega)$ where
$M$ is a compact Riemann surface of genus $g$ and $\omega$ is a
nonzero holomorphic 1-form on $M$ (an Abelian differential). Two
pairs $(M,\omega)$ and $(M',\omega')$ are identified if they are
mapped to one another by a conformal homeomorphism. The space
$\mathcal{H}_{g}$ is naturally stratified by the subsets
$\mathcal{H}_g(m_1,\ldots,m_{\kappa})$ of Abelian differentials
whose zeros have multiplicities $m_1,\ldots,m_{\kappa}$. By the
Euler-Poincar\'e formula $m_1+\ldots+m_{\kappa}=2g-2$. Every
stratum $\mathcal{H}_g(m_1,\ldots,m_{\kappa})$ is  a
complex-analytic orbifold of dimension $2g+\kappa-1$. Moreover,
$\mathcal{H}_g(m_1,\ldots,m_{\kappa})$ possesses a natural
Lebesgue measure $\nu$. Let us denote by
$(\mathcal{U}_s)_{s\in\R}$ the periodic continuous flow on
$\mathcal{H}_{g}$ defined by $\mathcal{U}_s(\omega)=e^{is}\omega$.

For every $\theta\in\C$ such that $|\theta|=1$, the Abelian
differential $\omega$ determines the direction field
$v_\theta:M\to TM$ so that $\omega(v_\theta)=\theta$ for all
points of $M$ except the zeros of $\omega$ which are singular for
$v_{\theta}$. By the direction flow we will mean the flow
$\mathcal{F}^{\theta}=\mathcal{F}^{\omega,\theta}$ generated by
$v_\theta$. The flows $\mathcal{F}^{1}$ and $\mathcal{F}^{i}$ are
called horizontal and vertical respectively. Direction flows
preserve the volume form
$\frac{i}{2}\omega\wedge\overline{\omega}$ on $M$ which vanishes
only at zeros of $\omega$. This form determines a finite volume
measure $\mu_{\omega}$ which is invariant for all direction flows.

 A
separatrix of $\mathcal{F}^{\theta}$ joining two singularities
(not necessarily distinct) is called a saddle connection of
$\mathcal{F}^{\theta}$. Recall that in every stratum for a.e.\
Abelian differential $(M,\omega)$ the vertical and the horizontal
flows have no saddle connections.

We are interested in ergodic (mixing) properties of the vertical
flow $\mathcal{F}^i$ for $g\geq 2$. Avila and Forni proved in
\cite{Av-Fo} that for $\nu$-almost all
$(M,\omega)\in\mathcal{H}_g(m_1,\ldots,m_{\kappa})$ the vertical
flow is weakly mixing with respect to the measure $\mu_{\omega}$.
It follows from Katok's result in \cite{Ka} that direction flows
are never strongly mixing.

In this paper we will restrict our attention to the mild mixing
property for $\mathcal{F}^i$. A finite measure--preserving
dynamical system is {\em mildly mixing} (see \cite{Fu-We}) if its
Cartesian product with an arbitrary ergodic conservative (finite
or infinite) measure-preserving dynamical system remains ergodic.
It is an immediate observation that the strong mixing of a
dynamical system implies its mild mixing and mild mixing implies
weak mixing. Recall that a measure-preserving flow
$(T_t)_{t\in\R}$ on $(X,\mathcal{B},\mu)$ is {\em rigid} if there
exists $t_n\to+\infty$ such that $\mu(T_{t_n}^{-1}A\triangle A)\to
0$ for all $A\in\mathcal{B}$. It was proved in \cite{Fu-We} that a
finite measure--preserving flow is mildly mixing if and only if it
has no non-trivial rigid factors. Using the same methods as in the
proof of Theorem~1.3 in \cite{Ve2}, one can prove that for almost
every $(M,\omega)\in\mathcal{H}_g(m_1,\ldots,m_{\kappa})$ the
vertical flow is rigid. It follows that the set $\mathcal{H}_{mm}$
of $(M,\omega)\in\mathcal{H}_g(m_1,\ldots,m_{\kappa})$ for which
the vertical flow is mildly mixing is of measure zero.
Nevertheless, we prove that  $\mathcal{H}_{mm}$ is dense in every
stratum $\mathcal{H}_g(m_1,\ldots,m_{\kappa})$ (see
Theorem~\ref{milddensegen}).

The proof of the  density of $\mathcal{H}_{mm}$ is based on three
components: a polygonal representation of Abelian differentials
 described in Section~\ref{abdif} where we follow
\cite{Vi0}, the Rauzy-Veech induction (Section~\ref{RVinduction})
and a sufficient condition  in \cite{Fr-Le-Le}  for special flows
built over irrational rotations and under piecewise constant roof
functions to be mildly mixing (see Proposition~\ref{mild}). The
proof consists of two main steps. In the first step, using the
Rauzy-Veech induction, we prove that a typical Abelian
differential is approximated by Abelian differentials whose
vertical flows are isomorphic to step special flows built over
three intervals exchange transformations and under roof functions
constant on the exchanged intervals (see Lemma~\ref{prostogen}).
In the second step we apply the main result of \cite{Fr-Le-Le}. It
says that a special flow built over an irrational circle rotation
by $\alpha$ and under a three steps roof function (with one jump
at $1-\alpha$ and one jump at some point $\xi$) is mildly mixing
for a dense set of the data ($\alpha$, $\xi$ and heights of the
steps). Using the Rauzy-Veech induction again, it follows that the
same result holds for step special flows over exchanges of three
intervals, i.e.\ such special flows are mildly mixing for a dense
set of data (see Corollary~\ref{wniosek}).

\section{Interval exchange transformations and a construction of Abelian
differentials}\label{abdif} In this section we briefly describe a
standard construction of Abelian differentials. For more details
we refer the reader to \cite{Vi0} and \cite{Vi}.

\subsection{Interval exchange transformations} Let $\mathcal{A}$ be a $d$-element alphabet and let
$\pi=(\pi_0,\pi_1)$ be a pair of bijections
$\pi_\vep:\mathcal{A}\to\{1,\ldots,d\}$ for $\vep=0,1$. We adopt
the notation from \cite{Vi0}. The set of all such pairs we will
denote by $\mathcal{P}_{\mathcal{A}}$. Denote by
$\mathcal{P}^0_{\mathcal{A}}$ the subset of irreducible pairs,
i.e.\ such that
$\pi_1\circ\pi_0^{-1}\{1,\ldots,k\}\neq\{1,\ldots,k\}$ for $1\leq
k<d$. Let $\mathcal{P}^*_{\mathcal{A}}$ stand for the set of
irreducible pairs such that
$\pi_1\circ\pi_0^{-1}(k+1)\neq\pi_1\circ\pi_0^{-1}(k)+1$ for
$1\leq k<d$.

 Let us consider
$\lambda=(\lambda_\alpha)_{\alpha\in\mathcal{A}}\in
\R_+^{\mathcal{A}}\setminus\{0\}$, where $\R_+=[0,+\infty)$. Let
\[|\lambda|=\sum_{\alpha\in\mathcal{A}}\lambda_\alpha,\;\;\;
I=\left[0,|\lambda|\right)\text{ and
}I_{\alpha}=\left[\sum_{\pi_0(\beta)<\pi_0(\alpha)}\lambda_\beta,\sum_{\pi_0(\beta)\leq\pi_0(\alpha)}\lambda_\beta\right).\]
Then $|I_\alpha|=\lambda_\alpha$. Let $\Omega_\pi$ stand the
matrix $[\Omega_{\alpha\,\beta}]_{\alpha,\beta\in\mathcal{A}}$
given by
\[\Omega_{\alpha\,\beta}=
\left\{\begin{array}{cl} +1 & \text{ if
}\pi_1(\alpha)>\pi_1(\beta)\text{ and
}\pi_0(\alpha)<\pi_0(\beta)\\
-1 & \text{ if }\pi_1(\alpha)<\pi_1(\beta)\text{ and
}\pi_0(\alpha)>\pi_0(\beta)\\
0& \text{ in all other cases.}
\end{array}\right.\]
Given $(\lambda,\pi)\in\R_+^\mathcal{A}\times
\mathcal{P}^0_{\mathcal{A}}$ let
$T_{(\lambda,\pi)}:[0,|\lambda|)\rightarrow[0,|\lambda|)$
 stand for  the {\em interval
exchange transformation} (IET) on $d$ intervals $I_\alpha$,
$\alpha\in\mathcal{A}$, which are rearranged according to the
permutation $\pi$, i.e.\ $T_{(\pi,\lambda)}x=x+w_\alpha$ for $x\in
I_\alpha$, where $w=\Omega_\pi\lambda$.

\begin{definition}Let $\partial I_{\alpha}$ stand for the left end point of the
interval $I_\alpha$. A pair ${(\lambda,\pi)}$ satisfies the {\em
Keane condition} if $T_{(\lambda,\pi)}^m\partial I_{\alpha}\neq
\partial I_{\beta}$ for all $m\geq 1$ and for all
$\alpha,\beta\in\mathcal{A}$ with $\pi_0(\beta)\neq 1$.
\end{definition}
It was proved by Keane in \cite{Ke} that if
$\pi\in\mathcal{P}^0_{\mathcal{A}}$ then for almost every
$\lambda$ the pair ${(\lambda,\pi)}$ satisfies the  Keane
condition.

\subsection{Construction of Abelian differentials} For each $\pi\in\mathcal{P}^0_{\mathcal{A}}$
denote by $\mathcal{T}^+_\pi$ the set of vectors
$\tau=(\tau_\alpha)_{\alpha\in\mathcal{A}}\in \R^{\mathcal{A}}$
such that
\begin{equation}\label{dodat}
\sum_{\pi_0(\alpha)\leq k}\tau_{\alpha}>0\text{ and
}\sum_{\pi_1(\alpha)\leq k}\tau_{\alpha}<0\text{ for all }1\leq
k<d.
\end{equation}
Denote by $\mathcal{T}^+_{\pi,\lambda}$ the set of $\tau\in
\mathcal{T}^+_\pi$ for which
\begin{equation}\label{dodatla}
\lambda_{\pi^{-1}_\vep(k)}=\lambda_{\pi^{-1}_\vep(k+1)}=0\Longrightarrow
\tau_{\pi^{-1}_\vep(k)}\cdot\tau_{\pi^{-1}_\vep(k+1)}>0\text{ for
}1\leq k<d,\;\vep=0,1.
\end{equation}
Of course, $\mathcal{T}^+_\pi$ and $\mathcal{T}^+_{\pi,\lambda}$
are open convex cones.

Assume that $\tau\in \mathcal{T}^+_{\pi,\lambda}$ and set
$\zeta_{\alpha}=\lambda_\alpha+i\tau_\alpha\in\C$ for each
$\alpha\in\mathcal{A}$. Let $\Gamma(\pi,\lambda,\tau)$ stand for
the closed curve on $\C$ formed by concatenation of vectors
\[\zeta_{\pi_0^{-1}(1)},\zeta_{\pi_0^{-1}(2)},\ldots,\zeta_{\pi_0^{-1}(d)},
-\zeta_{\pi_1^{-1}(d)},-\zeta_{\pi_1^{-1}(d-1)},\ldots,-\zeta_{\pi_1^{-1}(1)}\]
with starting point at zero.  The curve $\Gamma(\pi,\lambda,\tau)$
determines a polygon $P(\pi,\lambda,\tau)$ on $\C$ with $2d$ sides
which has $d$ pairs of parallel sides with the same length.
Condition (\ref{dodat}) means that the first $d-1$ vertices of the
polygon $\sum_{k=1}^j\zeta_{\pi_0^{-1}(k)}$, $j=1,\ldots,d-1$ are
on the upper half-plane and the last $d-1$ vertices
$\sum_{k=1}^j\zeta_{\pi_1^{-1}(k)}$, $j=1,\ldots,d-1$ are on the
lower half-plane.
\begin{definition}(see \cite{Vi0} and \cite{Zo})
The {\em suspension surface} $M(\pi,\lambda,\tau)$ is a compact
surface obtained by the identification of the sides of the polygon
$P(\pi,\lambda,\tau)$ in each pair of parallel sides. The surface
$M(\pi,\lambda,\tau)$ possesses a natural complex structure
inherited from $\C$ and a holomorphic $1$-form $\omega$ determined
by the form $dz$. Therefore $M(\pi,\lambda,\tau)$ can be treated
as an element of a moduli space.
\end{definition}
The zeros of $\omega$ correspond to the vertices of the polygon
$P(\pi,\lambda,\tau)$  and the vertical flow $\mathcal{F}^i$ moves
up each point of $P(\pi,\lambda,\tau)$ vertically at the unit
speed. Note that for every $s\in\R$, taking
$\lambda_s+i\tau_s=e^{is}(\lambda+i\tau)$,
\begin{equation}\label{obrot}
\text{ if }\lambda_s\in \R_+^{\mathcal{A}}\text{ and
}\tau_s\in\mathcal{T}^+_{\pi}\text{ then
}M(\pi,\lambda_s,\tau_s)=\mathcal{U}_sM(\pi,\lambda,\tau).
\end{equation}

\subsection{Zippered rectangles and a special representation of the vertical flow}
Suspension surfaces can be defined in the terms of zippered
rectangles introduced by Veech \cite{Ve1}. For every
$(\pi,\lambda,\tau)$ with $\tau\in \mathcal{T}^+_{\pi,\lambda}$
let us consider the vector $h=h(\tau)=-\Omega_\pi\tau$. In view of
(\ref{dodat}), $h\in\R_+^{\mathcal{A}}$. Here the surface
$M(\pi,\lambda,\tau)$ is obtained from the rectangles
$I_\alpha\times[0,h_\alpha]$, $\alpha\in\mathcal{A}$ by an
appropriate identification of parts of their sides. For example,
the interval $I_\alpha\times\{h_\alpha\}$ is identified by a
translation with $T_{(\pi,\lambda)}I_\alpha\times\{0\}$  for all
$\alpha\in\mathcal{A}$ (see \cite{Ve1} for details).

In this representation the vertical flow $\mathcal{F}^i$ moves up
each point of zippered rectangles vertically at the unit speed
which yields the following fact.
\begin{lemma}\label{spec}If $\tau\in \mathcal{T}^+_{\pi,\lambda}$ then the vertical flow on
$M(\pi,\lambda,\tau)$ has a special representation over the
interval exchange transformation $T_{(\pi,\lambda)}$ and under the
roof function
$$f_h:I\to\R_+,\;\;
f_h=\sum_{\alpha\in\mathcal{A}}h_{\alpha}\chi_{I_{\alpha}},$$
i.e.\ the vertical flow and the special flow
$T_{(\pi,\lambda)}^{f_h}$ are isomorphic as measure-preserving
systems.
\end{lemma}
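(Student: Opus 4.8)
The plan is to unwind the zippered rectangles description of $M(\pi,\lambda,\tau)$ recalled above and to recognize the base of the rectangles, $I\times\{0\}$, as a cross-section of the vertical flow whose first return map is $T_{(\pi,\lambda)}$ and whose first return time is $f_h$.

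First I would fix the decomposition of $M(\pi,\lambda,\tau)$ into the rectangles $R_\alpha=I_\alpha\times[0,h_\alpha]$, $\alpha\in\mathcal{A}$, where $h=h(\tau)=-\Omega_\pi\tau\in\R_+^{\mathcal{A}}$, and record two facts from \cite{Ve1}: (i) under this decomposition the volume form $\frac{i}{2}\omega\wedge\overline{\omega}=dx\wedge dy$ induced by $dz$ restricts on each $R_\alpha$ to the product of Lebesgue measure on $I_\alpha$ and Lebesgue measure on $[0,h_\alpha]$, so that $\mu_\omega$ is carried by the decomposition to $\sum_{\alpha\in\mathcal{A}}(\operatorname{Leb}|_{I_\alpha})\otimes(\operatorname{Leb}|_{[0,h_\alpha]})$; (ii) the gluing identifies the top side $I_\alpha\times\{h_\alpha\}$ with $T_{(\pi,\lambda)}I_\alpha\times\{0\}$ via the translation $x\mapsto x+w_\alpha$ with $w=\Omega_\pi\lambda$, while all remaining identifications concern only the vertical sides of the rectangles and hence do not meet the interior of $I\times\{0\}$.

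Next I would introduce the exceptional set $N\subset M(\pi,\lambda,\tau)$ equal to the union of the finitely many zeros of $\omega$ together with the forward and backward vertical orbits of the points $\partial I_\alpha\times\{0\}$, $\alpha\in\mathcal{A}$. Each of these orbits is a countable union of vertical segments, so $\mu_\omega(N)=0$, and $M(\pi,\lambda,\tau)\setminus N$ is $\mathcal{F}^i$-invariant. On this complement, a vertical trajectory issued from a point of $R_\alpha$ leaves $R_\alpha$ through the interior of its top side and re-enters the base through the interior of $T_{(\pi,\lambda)}I_\alpha\times\{0\}$; consequently every vertical orbit of $M(\pi,\lambda,\tau)\setminus N$ meets $(I\times\{0\})\setminus N$, the first return time at a point $(x,0)$ with $x\in I_\alpha$ equals $h_\alpha=f_h(x)$, and the first return map sends $(x,0)$ to $(x+w_\alpha,0)=(T_{(\pi,\lambda)}x,0)$.

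Finally I would assemble the isomorphism. Define $\Phi$ from the phase space $\{(x,s):x\in I,\ 0\le s<f_h(x)\}$ of the special flow $T_{(\pi,\lambda)}^{f_h}$ to $M(\pi,\lambda,\tau)$ by sending $(x,s)$ with $x\in I_\alpha$ to the image of $(x,s)\in R_\alpha$ in the suspension surface. By (ii) the map $\Phi$ is a bijection between a conull $T_{(\pi,\lambda)}^{f_h}$-invariant subset of its domain and $M(\pi,\lambda,\tau)\setminus N$; by (i) it carries the invariant measure of the special flow (the product of Lebesgue measure on $I$ with Lebesgue measure on the fibres) to $\mu_\omega$; and by construction it intertwines $T_{(\pi,\lambda)}^{f_h}$ with $\mathcal{F}^i$, since moving $(x,s)$ upward inside a fibre corresponds to flowing the point of $R_\alpha$ vertically, while crossing the level $s=f_h(x)$ corresponds, via the top--bottom identification in (ii), to applying $T_{(\pi,\lambda)}$ and resetting the fibre coordinate to $0$. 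Thus $\Phi$ is the desired measure-preserving isomorphism.

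The step I expect to be the main (and essentially the only) obstacle is the bookkeeping of the middle paragraph: checking that the identifications of the vertical sides of the zippered rectangles do not produce additional returns to $I\times\{0\}$ and that, after deleting the null set $N$, the base is genuinely a cross-section --- this is exactly where Veech's admissibility conditions encoded in $\tau\in\mathcal{T}^+_{\pi,\lambda}$ and $h=-\Omega_\pi\tau\in\R_+^{\mathcal{A}}$ enter; granted this, the measure-preserving isomorphism of the last paragraph is routine.
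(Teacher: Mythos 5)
Your proposal is correct and follows exactly the route the paper intends: the paper states Lemma~\ref{spec} as an immediate consequence of the zippered-rectangle representation (with the top--bottom identification $I_\alpha\times\{h_\alpha\}\sim T_{(\pi,\lambda)}I_\alpha\times\{0\}$ cited to Veech) and gives no written proof, while you simply flesh out that same argument with the cross-section, return time, null set, and measure bookkeeping. No discrepancy to report.
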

We will also need the following results.
\begin{proposition}[see Proposition~3.30 in
\cite{Vi} or \cite{Ve3}]\label{realizacja} If $m_i>0$ for
$i=1,\ldots,\kappa$ then $\nu$-almost every
$(M,\omega)\in\mathcal{H}_g(m_1,\ldots,m_{\kappa})$ may be
represented in the form $M(\pi,\lambda,\tau)$, where
$\#\mathcal{A}=2g+\kappa-1$.
\end{proposition}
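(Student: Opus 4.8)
The plan is to invert the suspension construction of Section~\ref{abdif}. Starting from $\nu$-almost every $(M,\omega)$ in the stratum, I would build a horizontal transversal to the vertical flow whose first-return map is an exchange of $d=2g+\kappa-1$ intervals, read off $\lambda$ and $\pi$ from that map and $\tau$ from the return-time function, and then recognize the resulting zippered rectangle as $(M,\omega)$ itself.

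First restrict to the full-measure set of $(M,\omega)\in\mathcal{H}_g(m_1,\ldots,m_\kappa)$ whose vertical flow has no saddle connection, as recalled above. For such an $(M,\omega)$ the vertical flow is minimal and every vertical separatrix is a proper ray. Pick one of the $\kappa$ zeros $p$, let $\Sigma$ be the horizontal separatrix issuing from $p$ in the positive real direction, and let $I\subset\Sigma$ be its initial segment normalized in the standard Veech way: $p$ is the left endpoint, and the right endpoint is the first point of $\Sigma$ whose backward vertical orbit meets a zero before returning to $I$. Since there are no vertical saddle connections, $I$ has positive finite length, $p$ is the only zero on $I$, the first-return map $T$ of $\mathcal{F}^i$ to $I$ is defined off finitely many points, and --- as the orbits of $\mathcal{F}^i$ are vertical and $I$ is horizontal --- each branch of $T$ is a translation. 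Thus $T=T_{(\pi,\lambda)}$ for some $\pi\in\mathcal{P}_{\mathcal{A}}$ with $\lambda_\alpha=|I_\alpha|$, $I_\alpha$ the continuity intervals; $\pi$ is irreducible, i.e.\ $\pi\in\mathcal{P}^0_{\mathcal{A}}$, because $M$ is connected and the flow has no saddle connection, and the return time is a constant $h_\alpha>0$ on each $I_\alpha$, giving $h=(h_\alpha)_{\alpha\in\mathcal{A}}\in\R_+^{\mathcal{A}}$. To see $\#\mathcal{A}=2g+\kappa-1$, cut $M$ open along $I$ and along the finitely many vertical separatrices from the zeros: this unfolds $M$ into a polygon isometric to some $P(\pi,\lambda,\tau)$, with $2d$ sides glued in $d$ parallel pairs and $2d$ vertices identified into exactly the $\kappa$ zeros --- here the hypothesis $m_i>0$ matters, as there are no marked regular points to inflate the vertex count --- so Euler's formula $\kappa-d+1=2-2g$ forces $d=2g+\kappa-1$, equivalently $(2d-2)\pi=\sum_i 2\pi(m_i+1)$.

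Next I would recover $\tau$ and identify the surface. The rectangles $I_\alpha\times[0,h_\alpha]$ tile $M$ and are glued exactly as in Veech's zippered rectangles, $I_\alpha\times\{h_\alpha\}$ to $T_{(\pi,\lambda)}I_\alpha\times\{0\}$. Solving $h=-\Omega_\pi\tau$ yields a $\tau$ that lies in $\mathcal{T}^+_\pi$, because the inequalities (\ref{dodat}) translate precisely into positivity of the partial sums of heights read along the left and right edges of the stack of rectangles; condition (\ref{dodatla}) is vacuous since all $\lambda_\alpha>0$, so in fact $\tau\in\mathcal{T}^+_{\pi,\lambda}$. The polygon $P(\pi,\lambda,\tau)$ is then the one obtained by unfolding $M$, and its complex structure together with the form $dz$ match those of $(M,\omega)$ under the developing map, so $M(\pi,\lambda,\tau)$ and $(M,\omega)$ represent the same point of the stratum; a local angle computation at each vertex confirms that the zero orders are the prescribed $m_1,\ldots,m_\kappa$, and a posteriori $(\pi,\lambda)$ satisfies the Keane condition, since a coincidence $T_{(\pi,\lambda)}^m\partial I_\alpha=\partial I_\beta$ would give a vertical saddle connection. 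Carrying this out on the entire full-measure set of differentials without vertical saddle connections yields the ``$\nu$-almost every'' conclusion, while Lebesgue measure classes are respected because the suspension map $(\pi,\lambda,\tau)\mapsto M(\pi,\lambda,\tau)$ is a local homeomorphism onto the stratum having the construction above as inverse.

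The step I expect to be the main obstacle is the normalization of the transversal $I$: its right endpoint must be chosen so that (i) $T$ has only finitely many discontinuities and every branch is a genuine translation, (ii) the number of branches is exactly $d=2g+\kappa-1$ and not larger --- a careless choice introduces spurious cut points and redundant combinatorics --- and (iii) the return time is constant on each $I_\alpha$, so that the roof function has the piecewise-constant shape of Lemma~\ref{spec}. This is exactly where the absence of vertical saddle connections enters, and making the choice canonical rather than merely possible, together with the unfolding argument giving the vertex count $\kappa$, is the bulk of the work; the remaining points --- irreducibility of $\pi$, solvability of $h=-\Omega_\pi\tau$ inside $\mathcal{T}^+_{\pi,\lambda}$, matching of zero orders, and compatibility of measure classes --- are then local and combinatorial.
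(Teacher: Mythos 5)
The paper offers no proof of this proposition --- it is imported verbatim from Proposition~3.30 of \cite{Vi} (see also \cite{Ve3}) --- so your attempt can only be measured against the standard argument in those sources. Your outline does follow that argument: discard the measure-zero set of surfaces with vertical saddle connections, take a horizontal transversal issuing from a zero, read off an interval exchange from the first-return map of the vertical flow, and obtain $d=2g+\kappa-1$ from Euler's formula applied to the unfolded polygon (this is indeed where the hypothesis $m_i>0$ enters, since otherwise marked regular points would inflate the vertex count). The skeleton is correct.

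There is, however, one step that as written does not work: the recovery of $\tau$ by ``solving $h=-\Omega_\pi\tau$''. The matrix $\Omega_\pi$ is antisymmetric of rank $2g$, so its kernel has dimension $\kappa-1$ and is nontrivial whenever $\kappa\geq 2$; the equation then determines $\tau$ only up to a $(\kappa-1)$-dimensional affine family. Different solutions in that family yield suspension surfaces with the same data $(\pi,\lambda,h)$ --- hence isomorphic vertical flows by Lemma~\ref{spec} --- but with different relative periods, i.e.\ genuinely different points of the stratum. Choosing an arbitrary solution would therefore prove only that the vertical flow of $(M,\omega)$ is isomorphic to that of \emph{some} $M(\pi,\lambda,\tau)$, not that $(M,\omega)$ itself equals $M(\pi,\lambda,\tau)$, which is what the proposition asserts. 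The correct move, at which your own ``unfolding'' sentence already gestures, is to read $\tau_\alpha$ directly as the imaginary part of the period of $\omega$ over the side of the developed polygon labelled $\alpha$ (equivalently, from the heights at which the singularities sit on the vertical edges of the rectangles --- the zipper data), and only afterwards to verify $h=-\Omega_\pi\tau$ together with the inequalities (\ref{dodat}). A second, smaller soft spot is the normalization of the right endpoint of $I$: your description is circular ($I$ occurs in its own defining clause), and this choice is exactly what guarantees $d$ rather than $d+1$ continuity intervals; the usual fix is to end $I$ at the first intersection of the horizontal separatrix with an incoming vertical separatrix of a singularity. With these two repairs your argument is the one in the cited references.
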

\begin{remark}\label{gestpow}
By the proof of Proposition~3.30 in \cite{Vi}, we can choose $\pi$
from  $\mathcal{P}^*_{\mathcal{A}}$.
\end{remark}
\begin{proposition}[see~\cite{Ve1} and \cite{Ve3}] \label{contm}
For fixed $\pi$ all Abelian differentials
$M(\pi,\lambda,\tau)$ lie in the same stratum
$\mathcal{H}_g(m_1,\ldots,m_{\kappa})$  and the map
\[\hat{\mathcal{H}}(\pi)=\{\pi\}\times\left(\R_+\setminus\{0\}\right)^{\mathcal{A}}\times \mathcal{T}^+_{\pi}\ni
(\pi,\lambda,\tau)\mapsto
M(\pi,\lambda,\tau)\in\mathcal{H}_g(m_1,\ldots,m_{\kappa})\] is
continuous.
\end{proposition}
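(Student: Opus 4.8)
The plan is to extract from the polygon/developing picture two facts: that the combinatorial type of the zero set of $\omega$ on $M(\pi,\lambda,\tau)$ depends on $\pi$ alone, and that $\lambda+i\tau$ serves as a system of period coordinates, in which the map in question is visibly continuous.

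First I would settle the claim about strata. On $M(\pi,\lambda,\tau)$ the holomorphic $1$-form $\omega$ is $dz$, so its zeros are precisely the images of the $2d$ vertices of the polygon $P(\pi,\lambda,\tau)$. The side-pairing identifications are translations: the side labelled $\alpha$ occurring among $\zeta_{\pi_0^{-1}(1)},\ldots,\zeta_{\pi_0^{-1}(d)}$ is glued to the side labelled $\alpha$ occurring among $-\zeta_{\pi_1^{-1}(d)},\ldots,-\zeta_{\pi_1^{-1}(1)}$. Consequently the equivalence relation induced on the set of $2d$ vertices --- which vertices get identified, and with what cyclic ordering of the adjacent edges --- depends only on $\pi$, not on $(\lambda,\tau)$. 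Hence the number $\kappa$ of equivalence classes and the cone angle $2\pi(m_j+1)$ at each are determined by $\pi$ alone; conditions (\ref{dodat}) and (\ref{dodatla}) only guarantee that the polygon is non-degenerate, so that these combinatorial data are genuinely realised. Therefore there is a fixed multiplicity vector $(m_1,\ldots,m_\kappa)=(m_1(\pi),\ldots,m_\kappa(\pi))$ with $M(\pi,\lambda,\tau)\in\mathcal{H}_g(m_1,\ldots,m_\kappa)$ for every admissible $(\lambda,\tau)$.

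Next I would prove continuity. Write $\Sigma\subset M(\pi,\lambda,\tau)$ for the finite set of zeros of $\omega$. Near a point of a stratum the natural complex-analytic structure is given by period coordinates: a neighbourhood is modelled on an open subset of $H^1(M,\Sigma;\C)$ via $[\omega']\mapsto\big(\int_{\gamma}\omega'\big)_{\gamma}$, where $\gamma$ runs over a basis of $H_1(M,\Sigma;\Z)$. Orienting the sides of $P(\pi,\lambda,\tau)$ appropriately, the $d$ paired sides represent relative cycles whose classes span $H_1(M,\Sigma;\Z)$ (here $d=2g+\kappa-1=\dim H_1(M,\Sigma;\Z)$, so they even form a basis), and the integral of $\omega=dz$ along the side labelled $\alpha$ equals $\zeta_\alpha=\lambda_\alpha+i\tau_\alpha$. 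Thus, in a period chart, the map $(\lambda,\tau)\mapsto M(\pi,\lambda,\tau)$ is the composition of the manifestly continuous (indeed $\R$-linear) map $(\lambda,\tau)\mapsto(\lambda_\alpha+i\tau_\alpha)_{\alpha\in\mathcal{A}}$ with the inverse of a chart. Since $\pi$ is fixed and non-degeneracy of the polygon is an open condition in $(\lambda,\tau)$, nearby data produce surfaces with the same labelling of zeros, so a whole neighbourhood of $(\lambda,\tau)$ lands in the domain of one such chart; continuity of $\hat{\mathcal{H}}(\pi)\ni(\pi,\lambda,\tau)\mapsto M(\pi,\lambda,\tau)$ follows.

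The step I expect to be the real content is the identification used above of the period coordinates coming from the polygon sides with genuine charts for the analytic (orbifold) structure on $\mathcal{H}_g(m_1,\ldots,m_\kappa)$ --- equivalently, that the zippered-rectangle atlas agrees with the standard atlas on the stratum; this is precisely what Veech proves in \cite{Ve1} and \cite{Ve3}, and here we simply invoke it. A self-contained alternative is to argue directly with the topology of the moduli space: for $(\lambda',\tau')$ close to $(\lambda,\tau)$ one builds an explicit homeomorphism $M(\pi,\lambda,\tau)\to M(\pi,\lambda',\tau')$, piecewise affine and sending each labelled side to the corresponding labelled side, and one bounds its quasiconformal dilatation by a quantity tending to $1$ as $(\lambda',\tau')\to(\lambda,\tau)$; by definition of the Teichm\"uller topology this yields the claim. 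The delicate bookkeeping there is near the vertices, where several affine patches are glued around a cone point of angle $2\pi(m_j+1)$, but away from those finitely many points the estimate is routine.
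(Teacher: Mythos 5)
The paper does not prove Proposition~\ref{contm} at all --- it is imported from \cite{Ve1} and \cite{Ve3} --- so there is no internal argument to compare yours against; what you have written is a correct outline of the standard proof. The first half (the vertex identifications of $P(\pi,\lambda,\tau)$, hence the number of singularity classes and the cone angle of each, are purely combinatorial data attached to $\pi$) is right and is exactly how one sees that the stratum is constant on $\hat{\mathcal{H}}(\pi)$. For continuity you reduce to the statement that the relative periods along the $d$ labelled sides give local charts on the stratum, in which the map is the linear map $(\lambda,\tau)\mapsto(\lambda_\alpha+i\tau_\alpha)_\alpha$; you correctly identify this chart compatibility as the one genuinely substantive step and delegate it to Veech, which is the same appeal the paper itself makes, so your write-up is an honest expansion rather than an independent proof. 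Two places you lean on standard facts without proof, neither of which is a real gap: (i) that the $d$ side classes span $H_1(M,\Sigma;\Z)$ (and hence, by the count $d=2g+\kappa-1$, form a basis) deserves a sentence, since it is what makes the period map injective on a neighbourhood; (ii) in your quasiconformal alternative, a triangulation of the polygon with vertices only at its corners and the induced piecewise-affine comparison map is $K$-quasiconformal off the finite singular set with $K\to 1$ as $(\lambda',\tau')\to(\lambda,\tau)$, and removability of isolated points for quasiconformal maps disposes of the ``bookkeeping near the cone points'' you flag as delicate, so that route closes cleanly as well.
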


\section{Rauzy-Veech induction}\label{RVinduction}
In this section we describe the Rauzy-Veech induction
renormalization procedure introduced for IETs by Rauzy in
\cite{Ra} and extended to zippered rectangles by Veech in
\cite{Ve1}.

Let
$(\pi,\lambda)\in\mathcal{P}^0_{\mathcal{A}}\times(\R_+^{\mathcal{A}}\setminus\{0\})$
be a pair such that
$\lambda_{\pi_0^{-1}(d)}\neq\lambda_{\pi_1^{-1}(d)}$. Set
\[\vep(\lambda,\pi)=\left\{
\begin{array}{ccl}
0&\text{ if }&\lambda_{\pi_0^{-1}(d)}>\lambda_{\pi_1^{-1}(d)}\\
1&\text{ if }&\lambda_{\pi_0^{-1}(d)}<\lambda_{\pi_1^{-1}(d)}.
\end{array}
\right.\] We say that $(\pi,\lambda)$ has type
$\vep(\lambda,\pi)$. For $\vep=0,1$ let
$R_\vep:\mathcal{P}^0_{\mathcal{A}}\to
\mathcal{P}^0_{\mathcal{A}}$ be defined by
$R_\vep(\pi_0,\pi_1)=(\pi'_0,\pi'_1)$, where
\begin{eqnarray*}\pi'_\vep(\alpha)&=&\pi_\vep(\alpha)\text{ for all }\alpha\in\mathcal{A}\text{ and }\\
\pi'_{1-\vep}(\alpha)&=&\left\{
\begin{array}{cll}
\pi_{1-\vep}(\alpha)& \text{ if
}&\pi_{1-\vep}(\alpha)\leq\pi_{1-\vep}\circ\pi^{-1}_\vep(d)\\
\pi_{1-\vep}(\alpha)+1& \text{ if
}&\pi_{1-\vep}\circ\pi^{-1}_\vep(d)<\pi_{1-\vep}(\alpha)<d\\
\pi_{1-\vep}\pi^{-1}_\vep(d)+1& \text{ if
}&\pi_{1-\vep}(\alpha)=d.\end{array} \right.
\end{eqnarray*}
Moreover, let
$\Theta_{\pi,\vep}=[\Theta_{\alpha\,\beta}]_{\alpha,\beta\in\mathcal{A}}$
stand for the matrix
\[\Theta_{\alpha\,\beta}=\left\{
\begin{array}{ll}
1& \text{ if
}\alpha=\beta\\
1& \text{ if
}\alpha=\pi_{1-\vep}^{-1}(d)\text{ and }\beta=\pi_{\vep}^{-1}(d)\\
0& \text{ in all other cases.}\end{array} \right.\] The {\em
Rauzy-Veech induction} of $T_{(\lambda,\pi)}$ is the first return
map $T'$ of $T_{(\lambda,\pi)}$ to the interval
$$\left[0,|\lambda|-\min(\lambda_{\pi^{-1}_0(d)},\lambda_{\pi^{-1}_1(d)})\right).$$
As it was shown by Rauzy in \cite{Ra}, $T'$ is also an IET on
$d$-intervals, hence $T'=T_{(\lambda',\pi')}$ for some
$(\lambda',\pi')\in\mathcal{P}^0_{\mathcal{A}}\times(\R_+^{\mathcal{A}}\setminus\{0\})$.
Moreover,
\[(\lambda',\pi')=(R_{\vep}\pi,\Theta^{-1*}_{\pi,\vep}\lambda),\text{ where }\vep=\vep(\pi,\lambda),\]
and $B^*$ denotes the conjugate transpose of $B$. This
renormalization procedure determines the transformation
$$\hat{R}:\mathcal{P}^0_{\mathcal{A}}\times\left(\R_+^{\mathcal{A}}\setminus\{0\}\right)\to
\mathcal{P}^0_{\mathcal{A}}\times\left(\R_+^{\mathcal{A}}\setminus\{0\}\right),\;\;
\hat{R}(\pi,\lambda)=(R_{\vep(\pi,\lambda)}\pi,\Theta^{-1*}_{\pi,\vep(\pi,\lambda)}\lambda)$$
whenever $\lambda_{\pi_0^{-1}(d)}\neq\lambda_{\pi_1^{-1}(d)}$.
Therefore the map  $\hat{R}$ is well defined for all
$(\pi,\lambda)$ satisfying the Keane condition. Moreover,
$\hat{R}(\pi,\lambda)$ fulfills the Keane condition for each such
$(\pi,\lambda)$. Consequently, $\hat{R}^n(\pi,\lambda)$ is well
defined for all $n\geq 1$ and for all $(\pi,\lambda)$ satisfying
the Keane condition (see \cite{Yo} for details).

\subsection{Rauzy graphs and Rauzy-Veech cocycle}
\begin{definition}Let us consider the relation $\sim$ on
$\mathcal{P}^0_{\mathcal{A}}$ for which $\pi\sim\pi'$ if there
exists $(\vep_1,\ldots,\vep_k)\in\{0,1\}^*$ such that
$\pi'=R_{\vep_k}\circ\ldots\circ R_{\vep_1}\pi$. Then $\sim$ is an
equivalence relation; its equivalence classes are called  {\em
Rauzy classes}.
\end{definition}
Of course, for each Rauzy class $C\subset
\mathcal{P}^0_{\mathcal{A}}$, the set $C\times \R_+^{\mathcal{A}}$
is $\hat{R}$--invariant.
\begin{definition}
A pair $\pi\in\mathcal{P}^0_{\mathcal{A}}$ is called {\em
standard} if  ${\pi}_1\circ{\pi}_0^{-1}(1)=d$ and
${\pi}_1\circ{\pi}_0^{-1}(d)=1$.
\end{definition}

\begin{proposition}[see \cite{Ra}]\label{Rauzy}
Every Rauzy class contains a standard pair.
\end{proposition}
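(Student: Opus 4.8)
The plan is to prove Proposition~\ref{Rauzy} by an explicit descent argument: starting from an arbitrary $\pi\in C$ we want to produce a finite sequence of Rauzy moves $R_{\vep_1},\ldots,R_{\vep_k}$ leading to a standard pair, i.e.\ one with $\pi_1\circ\pi_0^{-1}(1)=d$ and $\pi_1\circ\pi_0^{-1}(d)=1$. I would introduce a combinatorial quantity to be decreased along the induction. A natural candidate is the pair $(a,b)$ where $a=\pi_1\circ\pi_0^{-1}(1)$ is the position in the bottom row of the symbol that is first in the top row, and $b=\pi_0\circ\pi_1^{-1}(1)$ is the position in the top row of the symbol that is first in the bottom row (or, dually, one may track $d-\pi_1\circ\pi_0^{-1}(d)$). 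The irreducibility hypothesis $\pi\in\mathcal{P}^0_{\mathcal{A}}$ guarantees that $a>1$ and $b>1$ as long as we are not yet standard, so there is room to move.

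First I would recall precisely how $R_0$ and $R_1$ act on a pair written as two rows (the "winner/loser" description): applying $R_\vep$ keeps the $\vep$-row fixed and cyclically reinserts the last symbol of the $\vep$-row into the $(1-\vep)$-row immediately after the position occupied there by the last symbol of the other row. Concretely, $R_1$ fixes the bottom row and moves the last letter of the bottom row in the top row to just after the letter that currently sits last in the bottom row; $R_0$ is the mirror image. I would then show that by repeatedly applying $R_0$ one can bring the letter $\pi_0^{-1}(d)$ (the last one on top) down so that eventually $\pi_1\circ\pi_0^{-1}(d)$ becomes $1$ — each such move either strictly decreases a suitable counter or, if not, changes the type so that the dual move becomes available. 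The key point is that the "last on bottom" letter never changes under $R_0$, so iterating $R_0$ we are genuinely performing a finite sorting process on where the top-last letter lands in the bottom row; it can only land in finitely many positions, and once it lands in the first position we have achieved $\pi_1\circ\pi_0^{-1}(d)=1$.

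Once $\pi_1\circ\pi_0^{-1}(d)=1$ is secured, I would show this condition is preserved by all subsequent applications of $R_1$ (since $R_1$ fixes the bottom row, and the top-last letter, whatever it is, does not disturb the image of the bottom-first letter), and then use a symmetric sorting argument with $R_1$ to force $\pi_1\circ\pi_0^{-1}(1)=d$, again invoking irreducibility at each step to know the relevant move does not collapse. Combining the two phases yields a finite word $(\vep_1,\ldots,\vep_k)$ with $R_{\vep_k}\circ\cdots\circ R_{\vep_1}\pi$ standard, hence that standard pair lies in $C$. The main obstacle I anticipate is bookkeeping: choosing a monovariant that provably decreases (or a lexicographic pair $(a,b)$ with the right tie-breaking) so that the sorting process terminates, and checking that irreducibility is exactly what prevents the degenerate situation in which a Rauzy move would be forced to reduce $d$ or would leave the relevant position stuck. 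I would therefore spend the bulk of the proof carefully verifying the monotonicity of this monovariant under $R_0$ and $R_1$, and the preservation of the partial normalization between the two phases.
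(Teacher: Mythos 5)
First, a framing remark: the paper does not prove Proposition~\ref{Rauzy} at all --- it is quoted from Rauzy's paper, so your attempt must be measured against the classical argument, and against that standard there is a genuine gap sitting at the centre of your plan. From the definition of $R_\vep$ in Section~\ref{RVinduction}, $R_0$ fixes the top row and, in the bottom row, moves the letter $\pi_1^{-1}(d)$ to the position immediately after $\pi_1(\pi_0^{-1}(d))$, displacing only the letters strictly to the right of that position. Consequently $R_0$ leaves both the top-last letter $\pi_0^{-1}(d)$ and its position $\pi_1\circ\pi_0^{-1}(d)$ in the bottom row unchanged: the quantity you propose to drive to $1$ by iterating $R_0$ is an exact invariant of $R_0$, so your first phase makes no progress at all. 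Your stated ``key point'' that the bottom-last letter never changes under $R_0$ is also false --- it is precisely the letter that $R_0$ displaces --- while the true dual statement (the top-last letter never changes under $R_0$) is what kills the sorting process rather than powering it. Symmetrically, $R_1$ preserves $\pi_0\circ\pi_1^{-1}(d)$ but not $\pi_1\circ\pi_0^{-1}(d)$, so the claim that your second phase preserves the normalization achieved in the first also fails as written.

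Swapping the roles of $R_0$ and $R_1$ repairs only half of this. Iterating $R_1$ alone cyclically permutes the top-row letters in positions $>\pi_0\circ\pi_1^{-1}(d)$ (an $R_1$-invariant cut), so only those letters can ever be brought to the last top position; for instance $\pi=\left(\begin{smallmatrix}1&2&3&4\\2&4&1&3\end{smallmatrix}\right)$ is irreducible, not standard, and is a fixed point of $R_1$, while $R_0$ cannot alter $\pi_1\circ\pi_0^{-1}(4)=2$. Hence achieving $\pi_1\circ\pi_0^{-1}(d)=1$ genuinely requires interleaving the two moves, and proving that this interleaving terminates --- equivalently, that the class-invariant first letter of the bottom row can be made top-last --- is exactly the nontrivial content of Rauzy's lemma, established by an induction that uses irreducibility in an essential way; no correct monovariant for it appears in your write-up. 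By contrast, the second phase can be salvaged: once $\pi_1\circ\pi_0^{-1}(d)=1$ holds, iterating $R_0$ preserves this condition and cyclically permutes all bottom-row letters except the first, so the top-first letter (which differs from the bottom-first letter by irreducibility at $k=1$) can indeed be made bottom-last. The missing idea is therefore concentrated entirely in the first normalization step, which is where the real proof lives.
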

 Denote by $\Theta:C\times
\R_+^{\mathcal{A}}\to GL(d,\Z)$ the {\em Rauzy-Veech cocycle}
$$\Theta(\pi,\lambda)=\Theta_{\pi,\vep(\pi,\lambda)}.$$
If $(\pi',\lambda')=\hat{{R}}^n(\pi,\lambda)$ then
$\lambda'=\Theta^{(n)}(\pi,\lambda)^{-1*}\lambda$, where
\[\Theta^{(n)}(\pi,\lambda)=\Theta(\hat{R}^{n-1}(\pi,\lambda))\cdot\Theta(\hat{R}^{n-2}(\pi,\lambda))\cdot\ldots\cdot\Theta(\hat{R}(\pi,\lambda))\cdot\Theta(\pi,\lambda)\]
\begin{remark}\label{powrot}
For every $\lambda\in(\R_+\setminus\{0\})^{\mathcal{A}}$ we have
$\vep(\pi,\Theta_{\pi,\vep}^*\lambda)=\vep$. Indeed,
\begin{eqnarray*}\left(\Theta_{\pi,\vep}^*\lambda\right)_{\pi^{-1}_{\vep}(d)}&=&\sum_{\alpha\in\mathcal{A}}(\Theta_{\pi,\vep})_{\alpha\,\pi^{-1}_{\vep}(d)}\lambda_{\alpha}=
\lambda_{\pi^{-1}_{\vep}(d)}+\lambda_{\pi^{-1}_{1-\vep}(d)},\\
\left(\Theta_{\pi,\vep}^*\lambda\right)_{\pi^{-1}_{1-\vep}(d)}&=&\sum_{\alpha\in\mathcal{A}}(\Theta_{\pi,\vep})_{\alpha\,\pi^{-1}_{1-\vep}(d)}\lambda_{\alpha}=
\lambda_{\pi^{-1}_{1-\vep}(d)},
\end{eqnarray*} and hence
$\left(\Theta_{\pi,\vep}^*\lambda\right)_{\pi^{-1}_{\vep}(d)}>\left(\Theta_{\pi,\vep}^*\lambda\right)_{\pi^{-1}_{1-\vep}(d)}$.
Therefore
$\hat{R}(\pi,\Theta_{\pi,\vep}^*\lambda)=(R_\vep\pi,\lambda)$.

Now assume that $(\pi',\lambda')=\hat{R}(\pi,\lambda)$. Then for
every $\lambda''\in(\R_+\setminus\{0\})^{\mathcal{A}}$,
\[\hat{R}(\pi,\Theta(\pi,\lambda)^*\lambda'')=\hat{R}(\pi,\Theta_{\pi,\vep(\pi,\lambda)}^*\lambda'')=(R_{\vep(\pi,\lambda)},\lambda'')=(\pi',\lambda'').\]
It follows that for every $(\pi,\lambda)$, $n\geq 1$ and
$\lambda''\in(\R_+\setminus\{0\})^{\mathcal{A}}$
$$(\pi',\lambda')=\hat{R}^n(\pi,\lambda)\text{ implies }
\hat{R}^n(\pi,\Theta^{(n)}(\pi,\lambda)^*\lambda'')=(\pi',\lambda'').$$
\end{remark}

\subsection{Extended Rauzy-Veech induction}
For every Rauzy class $C\subset \mathcal{P}^0_{\mathcal{A}}$ let
$$\hat{\mathcal{H}}(C)=\{(\pi,\lambda,\tau):\pi\in
C,\lambda\in\R_+^{\mathcal{A}}\setminus\{0\},\tau\in
\mathcal{T}^+_\pi\}.$$ By the {\em extended Rauzy-Veech induction}
we mean the map
$\hat{\mathcal{R}}:\hat{\mathcal{H}}(C)\to\hat{\mathcal{H}}(C)$,
$$\hat{\mathcal{R}}(\pi,\lambda,\tau)=(R_{\vep(\pi,\lambda)}\pi,
\Theta^{-1*}_{\pi,\vep(\pi,\lambda)}\lambda,\Theta^{-1*}_{\pi,\vep(\pi,\lambda)}\tau)=(\hat{R}(\pi,\lambda),\Theta^{-1*}(\pi,\lambda)\tau).$$
By Lemma~18.1 in \cite{Vi0}, if
$(\pi',\lambda')=\hat{R}(\pi,\lambda)$ then
$\Theta^{-1*}(\pi,\lambda)\tau\in \mathcal{T}^+_{\pi'}$, and hence
$\hat{\mathcal{R}}:\hat{\mathcal{H}}(C)\to\hat{\mathcal{H}}(C)$ is
well defined almost everywhere. Moreover,  for every $n\geq 1$
\[\text{
if
}(\pi',\lambda',\tau')=\hat{\mathcal{R}}^n(\pi,\lambda,\tau)\text{
then }\lambda'=\Theta^{(n)}(\pi,\lambda)^{-1*}\lambda\text{ and
}\tau'=\Theta^{(n)}(\pi,\lambda)^{-1*}\tau.\]

\begin{lemma}[see e.g.\ Section~18 in \cite{Vi0}] $M(\hat{\mathcal{R}}^n(\pi,\lambda,\tau))$ and
$M(\pi,\lambda,\tau)$ are the same elements of the moduli space.
\end{lemma}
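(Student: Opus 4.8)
The plan is to reduce the statement to the single step $n=1$ by induction, and to prove that step by exhibiting an elementary translation equivalence between the two suspension surfaces. First recall what the assertion means: the data $M(\pi,\lambda,\tau)$ and $M(\pi',\lambda',\tau')$ represent the same point of $\mathcal{H}_g(m_1,\ldots,m_\kappa)$ exactly when there is a conformal homeomorphism carrying the holomorphic $1$-form of the first to that of the second. In each case that $1$-form is the restriction of $dz$ to the polygon and all side identifications are translations, so both surfaces carry translation atlases; hence it suffices to produce a homeomorphism which in these atlases is locally a translation. For the inductive reduction, write $(\pi',\lambda',\tau')=\hat{\mathcal{R}}(\pi,\lambda,\tau)$; Rauzy's results on the combinatorial induction together with Lemma~18.1 in \cite{Vi0} (quoted just above) ensure $(\pi',\lambda',\tau')\in\hat{\mathcal{H}}(C)$, so that $\hat{\mathcal{R}}^{n}=\hat{\mathcal{R}}\circ\hat{\mathcal{R}}^{n-1}$ on its domain of definition and it is enough to treat $n=1$.

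For the step $n=1$ I would work in the zippered-rectangle model recalled before Lemma~\ref{spec}: $M(\pi,\lambda,\tau)$ is obtained from the rectangles $I_\alpha\times[0,h_\alpha]$, with $h=h(\tau)=-\Omega_\pi\tau$, by identifying $I_\alpha\times\{h_\alpha\}$ with $T_{(\pi,\lambda)}I_\alpha\times\{0\}$, and the vertical flow is the unit upward flow. Suppose first that $(\pi,\lambda)$ has type $\vep=0$, so $\lambda_{\pi_0^{-1}(d)}>\lambda_{\pi_1^{-1}(d)}$ and the induction passes to the first return map on $J=[0,|\lambda|-\lambda_{\pi_1^{-1}(d)})$. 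The interval $[|\lambda|-\lambda_{\pi_1^{-1}(d)},|\lambda|)$ is simultaneously the right-hand part of $I_{\pi_0^{-1}(d)}$ and the image $T_{(\pi,\lambda)}I_{\pi_1^{-1}(d)}$. I would cut off the sub-rectangle $R$ of $I_{\pi_0^{-1}(d)}\times[0,h_{\pi_0^{-1}(d)}]$ lying over that interval and translate it so that its bottom edge lands on $I_{\pi_1^{-1}(d)}\times\{h_{\pi_1^{-1}(d)}\}$, leaving every other identification untouched. This is legitimate precisely because the bottom edge of $R$ is, on $M(\pi,\lambda,\tau)$, already glued to $I_{\pi_1^{-1}(d)}\times\{h_{\pi_1^{-1}(d)}\}$; since we cut along an edge that is identified on the surface and move the piece by a translation, the quotient space, its complex structure and the form $dz$ are literally unchanged.

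It then remains to recognize the reassembled object as $M(\hat{\mathcal{R}}(\pi,\lambda,\tau))$. The new zippered rectangles have base $J$, widths $\lambda'=\Theta^{-1*}_{\pi,0}\lambda$ (equivalently $\lambda=\Theta^{*}_{\pi,0}\lambda'$, which records that $J$ is shorter by $\lambda_{\pi_1^{-1}(d)}$), and heights $h'=\Theta_{\pi,0}h$, i.e.\ only the column over $I_{\pi_1^{-1}(d)}$ grows, by $h_{\pi_0^{-1}(d)}$, so the area $\langle\lambda,h\rangle$ is preserved; reading off how the new tops glue to the new base shows the combinatorics is $\pi'=R_0\pi$. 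Passing back from zippered rectangles to the polygon, this says that $P(\hat{\mathcal{R}}(\pi,\lambda,\tau))$ is exactly what the cut-and-paste produced, provided one checks from the definition of $\Theta_{\pi,0}$ that the suspension datum transforms as $\tau'=\Theta^{-1*}_{\pi,0}\tau$ and that $\tau'\in\mathcal{T}^+_{\pi'}$ (Lemma~18.1 in \cite{Vi0}), so that $P(\pi',\lambda',\tau')$ is a genuine polygon satisfying \eqref{dodat}. The case $\vep=1$ is obtained by exchanging the roles of $\pi_0$ and $\pi_1$ (reflecting the picture), so it follows as well, and induction on $n$ then finishes the proof.

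I expect the only real obstacle to be the bookkeeping in this last step: pinning down exactly which sub-rectangle is cut, how far it is translated, where each old side ends up in the new concatenation order of the polygon, and matching all of this against the definitions of $R_\vep$, $\Theta_{\pi,\vep}$ and $\hat{\mathcal{R}}$ — in particular the heights identity $h(\Theta^{-1*}_{\pi,\vep}\tau)=\Theta_{\pi,\vep}h(\tau)$, equivalently the relation $\Omega_{\pi'}=\Theta_{\pi,\vep}\,\Omega_\pi\,\Theta^{*}_{\pi,\vep}$ between the intersection matrices. This verification is routine and is carried out in Veech's original treatment of zippered rectangles \cite{Ve1} and in Section~18 of \cite{Vi0}; in the write-up I would present the geometric cut-and-paste and refer to those sources for the combinatorial identities.
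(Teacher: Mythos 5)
Your argument is correct and is precisely the standard cut-and-paste on zippered rectangles that the paper itself does not reproduce but delegates to Section~18 of \cite{Vi0} (and to \cite{Ve1}): the lemma is stated there with only a citation, and your reduction to $n=1$ followed by translating the sub-rectangle over $T_{(\pi,\lambda)}I_{\pi_1^{-1}(d)}$ onto the top of the $\pi_1^{-1}(d)$-column, with $\lambda'=\Theta^{-1*}_{\pi,\vep}\lambda$, $h'=\Theta_{\pi,\vep}h$ and $\tau'=\Theta^{-1*}_{\pi,\vep}\tau$, is exactly the argument in those sources. No discrepancy to report.
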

Denote by $(\hat{\mathcal{T}}_s)_{s\in\R}$  the {\em Teichm\"uller
flow} on $\hat{\mathcal{H}}(C)$,
$$\hat{\mathcal{T}}_s(\pi,\lambda,\tau)=(\pi,e^s\lambda,e^{-s}\tau).$$
The set
$\mathcal{H}(C)=\{(\pi,\lambda,\tau)\in\hat{\mathcal{H}}(C):|\lambda|=1\}$
is a global cross-section for $(\hat{\mathcal{T}}_s)_{s\in\R}$.
Let $t_R:\hat{\mathcal{H}}(C)\to\R_+$ be defined by
$$t_R(\pi,\lambda,\tau)=-\log\left(1-\lambda_{\pi_{1-\vep}^{-1}(d)}/|\lambda|\right)\text{
whenever }(\pi,\lambda)\text{ has type }\vep.$$ If
$\hat{\mathcal{R}}(\pi,\lambda,\tau)=(\pi',\lambda',\tau')$ then
$t_R(\pi,\lambda,\tau)=-\log(|\lambda'|/|\lambda|)$ and
$|\lambda|=e^{t_R(\pi,\lambda,\tau)}|\lambda'|$. Let us consider
the Rauzy-Veech renormalization map
$\mathcal{R}:\mathcal{H}(C)\to\mathcal{H}(C)$ given by
$$\mathcal{R}=\hat{\mathcal{R}}\circ
\hat{\mathcal{T}}_{t_R(\pi,\lambda,\tau)}(\pi,\lambda,\tau)=(\pi',\lambda'/|\lambda'|,\tau'|\lambda'|).$$
Let $m$ stand for the restriction of the measure
$d\pi\,d_1\lambda\,d\tau$ to the set $\mathcal{H}(C)$, where
$d\pi$ is the counting measure on $\mathcal{P}^0_{\mathcal{A}}$,
$d_1\lambda$ is the Lebesgue measure on
$$\Lambda_{\mathcal{A}}=\{\lambda\in(\R_+\setminus\{0\})^{\mathcal{A}}:|\lambda|=1\}$$
and $d\tau$ is the Lebesgue measure on $\R^{\mathcal{A}}$.

\begin{theorem}[see Corollary~27.3 in \cite{Vi0}]\label{ergodicity}
For every Rauzy class $C\subset\mathcal{P}^0_\mathcal{A}$ the
measure $m$ is an $\mathcal{R}$-invariant ergodic conservative
measure on $\mathcal{H}(C)$.
\end{theorem}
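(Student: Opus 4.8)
The plan is to recognize this as the zippered--rectangles form of Veech's ergodicity theorem for the Rauzy-Veech renormalization, and to establish it in three stages: $\mathcal{R}$-invariance of $m$, conservativity, and ergodicity.

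\emph{Invariance.} On each of its branches the extended induction $\hat{\mathcal{R}}$ acts on $\hat{\mathcal{H}}(C)$ by the linear map $(\lambda,\tau)\mapsto(\Theta_{\pi,\vep}^{-1*}\lambda,\Theta_{\pi,\vep}^{-1*}\tau)$ with $\Theta_{\pi,\vep}\in GL(d,\Z)$, so its Jacobian is $\pm1$ and it preserves the Lebesgue measure $\hat m=d\pi\,d\lambda\,d\tau$ on $\hat{\mathcal{H}}(C)$ (the locus where $\hat{\mathcal{R}}$ is undefined --- where $\lambda_{\pi_0^{-1}(d)}=\lambda_{\pi_1^{-1}(d)}$, or the Keane condition fails --- is $\hat m$-null). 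The Teichm\"uller flow $\hat{\mathcal{T}}_s$ also preserves $\hat m$ (its Jacobian is $e^{sd}e^{-sd}=1$) and commutes with $\hat{\mathcal{R}}$, because $\vep(\pi,e^s\lambda)=\vep(\pi,\lambda)$ and $\Theta_{\pi,\vep}$ depends only on $(\pi,\vep)$. Since $\mathcal{H}(C)=\{|\lambda|=1\}$ meets every $\hat{\mathcal{T}}$-orbit exactly once, the coordinates $(\pi,\lambda,\tau)\mapsto\bigl((\pi,\lambda/|\lambda|,\tau|\lambda|),\log|\lambda|\bigr)$ identify $\hat{\mathcal{H}}(C)$ with $\mathcal{H}(C)\times\R$, carrying $\hat{\mathcal{T}}$ to translation in the $\R$-factor and, after a polar change of variables, $\hat m$ to $m$ times Lebesgue measure on $\R$ (up to a constant). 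As $\hat{\mathcal{R}}$ commutes with $\hat{\mathcal{T}}$ it descends to a map of $\mathcal{H}(C)$, which is exactly $\mathcal{R}$, so the $\hat m$-invariance of $\hat{\mathcal{R}}$ is equivalent to the $m$-invariance of $\mathcal{R}$.

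\emph{Conservativity.} This is a genuine point, because $m$ is infinite: the cone $\mathcal{T}^+_\pi$ is unbounded. I would derive it from the recurrence of the Rauzy-Veech algorithm --- $m$-a.e.\ point returns to any set of positive $m$-measure --- which is part of Veech's analysis; a convenient route passes through the Zorich acceleration on the IET base, and for the details I refer to \cite{Vi0}.

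\emph{Ergodicity} is the substantive part, and the step I expect to be the main obstacle. One reduces first to the renormalization on the base $\{(\pi,\lambda):|\lambda|=1\}$ and then to its Zorich acceleration. The geometric engine is that along $m$-a.e.\ orbit the cocycle products $\Theta^{(n)}(\pi,\lambda)$ contain infinitely often a strictly positive matrix; this rests on the strong connectedness of the Rauzy graph of $C$ (cf.\ Proposition~\ref{Rauzy}), a complete path in that graph producing a strictly positive transition matrix. Strict positivity forces uniform contraction, in the Hilbert metric, of the simplex of the $\lambda$'s and of the cone of the vectors $h=-\Omega_\pi\tau$, so that the ``stable'' fibres of the inverse dynamics collapse to points and the non-compact $\tau$-direction becomes asymptotically determined by the base orbit. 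Coupling this with the bounded-distortion estimates for the Zorich acceleration, a Hopf-type argument then shows that every $\mathcal{R}$-invariant set is either $m$-null or has $m$-null complement. The delicate issues are the control of the unbounded $\tau$-direction and the propagation of (co-)nullity across all of $\mathcal{H}(C)$; for these I would follow Corollary~27.3 and the preceding sections of \cite{Vi0}.
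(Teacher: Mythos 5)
The paper offers no proof of this theorem at all --- it is imported verbatim from Corollary~27.3 of \cite{Vi0} (ultimately Veech's theorem from \cite{Ve1}) --- and your outline is precisely the standard argument from that source: unimodularity of $\Theta_{\pi,\vep}$ together with the exact cancellation of the Jacobian factors $e^{dt}$ and $e^{-dt}$ in the suspension coordinates for invariance, and positivity of the cocycle matrices, bounded distortion for the accelerated algorithm, and a Hopf/natural-extension argument for conservativity and ergodicity. Your invariance computation is correct and self-contained, while the two substantive parts are, as you acknowledge, deferred to \cite{Vi0} --- which is exactly what the paper itself does.
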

\subsection{Different special representations of the vertical flow}
Fix $(\pi,\lambda,\tau)\in\hat{\mathcal{H}}(C)$. Recall that the
vertical flow $\mathcal{F}^i$ on $M(\pi,\lambda,\tau)$ has the
special representation over $T_{(\pi,\lambda)}$ and under
$f_h:I\to\R_+$, where
$h=h(\pi,\lambda,\tau)=-\Omega_\pi\tau\in\R_+^{\mathcal{A}}$. Let
$(\pi',\lambda',\tau')=\hat{\mathcal{R}}(\pi,\lambda,\tau)$ and
$h'=-\Omega_{\pi'}\tau'$. In view of
$\Omega_{\pi'}=\Theta(\pi,\lambda)\,\Omega_\pi\,\Theta^*(\pi,\lambda)$
(see Lemma~10.2 in \cite{Vi0}),
\[h'=-\Omega_{\pi'}\tau'=-\Omega_{\pi'}\,\Theta^{-1*}(\pi,\lambda)\tau
=-\Theta(\pi,\lambda)\,\Omega_{\pi}\tau=\Theta(\pi,\lambda)h.\]
Since $M(\pi,\lambda,\tau)$ and $M(\pi',\lambda',\tau')$ are the
same elements of the moduli space, the special flows
$T^{f_h}_{(\pi,\lambda)}$ and $T^{f_{h'}}_{(\pi',\lambda')}$ are
isomorphic. In fact, a more general result holds. We leave the
proof of the following simple lemma  to the reader.

\begin{lemma}\label{przejpot}For every interval exchange transformation $T_{(\pi,\lambda)}$
and $h\in\R_+^{\mathcal{A}}$ the special flows
$T_{(\pi,\lambda)}^{f_h}$ and
$T_{\hat{R}(\pi,\lambda)}^{f_{\Theta(\pi,\lambda)h}}$ are
isomorphic.
\end{lemma}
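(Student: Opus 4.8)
The plan is to combine two facts: that a measure–preserving flow realised as a special flow is recovered, up to isomorphism, from the special flow over any positive–measure cross–section of the base; and that, by construction, $T_{\hat R(\pi,\lambda)}$ is the first–return map of $T_{(\pi,\lambda)}$ to a suitable subinterval. Write $T=T_{(\pi,\lambda)}$, $(\pi',\lambda')=\hat R(\pi,\lambda)$ (so we tacitly assume $\lambda_{\pi_0^{-1}(d)}\neq\lambda_{\pi_1^{-1}(d)}$ so that this is defined), $T'=T_{(\pi',\lambda')}$, and $I'=[0,|\lambda'|)$ with $|\lambda'|=|\lambda|-\min(\lambda_{\pi_0^{-1}(d)},\lambda_{\pi_1^{-1}(d)})$; by definition $T'$ is the first–return map of $T$ to $I'$. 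First I would record the elementary principle: if a flow is the special flow $T^f$ over $(I,\mathrm{Leb})$ and $I'\subset I$ has positive measure with first–return map $T_{I'}$ and first–return time $r\colon I'\to\N$, then that flow is also isomorphic to $T_{I'}^{\tilde f}$ with $\tilde f(x)=\sum_{j=0}^{r(x)-1}f(T^jx)$; the isomorphism sends $(x,t)$, $x\in I'$, with $\sum_{j<k}f(T^jx)\le t<\sum_{j\le k}f(T^jx)$, to $\bigl(T^kx,\,t-\sum_{j<k}f(T^jx)\bigr)$, and one checks directly that this is a measure–preserving bijection intertwining the flows (and has inverse given by following the $T$–orbit back down to $I'$). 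Applying this with $f=f_h$, it remains only to verify that $\tilde f=f_{\Theta(\pi,\lambda)h}$, i.e.\ that $\tilde f$ is constant on each interval $I'_\gamma$ of $T'$ with exactly the values $(\Theta(\pi,\lambda)h)_\gamma$.

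Next I would compute the return time. Take type $\vep=0$ (the case $\vep=1$ is symmetric), and set $\alpha_+=\pi_0^{-1}(d)$, $\alpha_-=\pi_1^{-1}(d)$, which are distinct since $\lambda_{\alpha_+}\neq\lambda_{\alpha_-}$. As $I_{\alpha_-}$ is last in the $\pi_1$–order, $T(I_{\alpha_-})=[|\lambda|-\lambda_{\alpha_-},|\lambda|)=[|\lambda'|,|\lambda|)=I\setminus I'$, so $T^{-1}(I\setminus I')=I_{\alpha_-}\subset I'$; hence $r(x)=1$ for $x\in I'\setminus I_{\alpha_-}$. For $x\in I_{\alpha_-}$ one has $Tx\in[|\lambda'|,|\lambda|)\subset I_{\alpha_+}$ (using $\lambda_{\alpha_+}>\lambda_{\alpha_-}$ and that $I_{\alpha_+}$ is last in the $\pi_0$–order), so $T^2x\in T(I_{\alpha_+})$, whose right endpoint is $\sum_{\pi_1(\delta)\le\pi_1(\alpha_+)}\lambda_\delta\le|\lambda|-\lambda_{\alpha_-}=|\lambda'|$ because $\pi_1(\alpha_+)\le d-1$; thus $T^2x\in I'$ and $r(x)=2$. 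Therefore $\tilde f\equiv h_\gamma$ on $I'_\gamma$ for $\gamma\neq\alpha_-$, while $\tilde f\equiv h_{\alpha_-}+h_{\alpha_+}$ on $I'_{\alpha_-}$. Comparing with the definition of $\Theta_{\pi,0}$, $(\Theta_{\pi,0}h)_\gamma=h_\gamma$ for $\gamma\neq\alpha_-$ and $(\Theta_{\pi,0}h)_{\alpha_-}=h_{\alpha_-}+h_{\alpha_+}$, so $\tilde f=f_{\Theta_{\pi,0}h}=f_{\Theta(\pi,\lambda)h}$. For $\vep=1$ the roles of $\alpha_+$ and $\alpha_-$ swap: the interval $I_{\alpha_+}$ (last in the $\pi_0$–order) gets split by $T'$ into a return–time–$1$ piece (the new $I'_{\alpha_-}$) and a return–time–$2$ piece (the new $I'_{\alpha_+}$, whose $T$–orbit passes through $I_{\alpha_-}$), and the same computation gives $\tilde f=f_{\Theta_{\pi,1}h}$.

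Together with the cross–section principle of the first paragraph this yields the isomorphism $T^{f_h}_{(\pi,\lambda)}\cong T^{f_{\Theta(\pi,\lambda)h}}_{\hat R(\pi,\lambda)}$. The only step requiring genuine care is pinning down which single interval acquires first–return time $2$ and what the second point of its $T$–orbit is; once that is settled, matching $\tilde f$ with $\Theta(\pi,\lambda)h$ is immediate from the explicit form of $\Theta_{\pi,\vep}$. I do not expect any substantive obstacle beyond this bookkeeping, which is why the statement is safely left to the reader.
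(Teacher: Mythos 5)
The paper offers no proof of this lemma --- it is explicitly left to the reader --- so there is nothing to compare line by line. What the paper does do, in the subsection immediately preceding the lemma, is handle the special case $h=-\Omega_\pi\tau$ geometrically: since $M(\pi,\lambda,\tau)$ and $M(\hat{\mathcal R}(\pi,\lambda,\tau))$ are the same point of the moduli space and $h'=-\Omega_{\pi'}\tau'=\Theta(\pi,\lambda)h$, the two special representations of the same vertical flow must be isomorphic. That argument does not cover an arbitrary $h\in\R_+^{\mathcal A}$, which is what the lemma asserts; your route --- realising $T_{\hat R(\pi,\lambda)}$ as the first-return map of $T_{(\pi,\lambda)}$ to $I'$ and invoking the standard cross-section principle for special flows, so that the new roof is the Birkhoff sum of $f_h$ over the return time --- is the correct general argument and is surely what the author had in mind. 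Your treatment of type $0$ is complete and correct: the return time is $2$ exactly on $I_{\alpha_-}=I'_{\alpha_-}$, whose image lands in $I_{\alpha_+}$, giving $\tilde f=h_{\alpha_-}+h_{\alpha_+}$ there and $h_\gamma$ elsewhere, which matches $\Theta_{\pi,0}=\id+E_{\alpha_-\alpha_+}$.

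Your one-sentence dispatch of the case $\vep=1$, however, is not merely terse but wrong as stated. When $\lambda_{\alpha_-}>\lambda_{\alpha_+}$ the discarded piece $I\setminus I'=[\,|\lambda|-\lambda_{\alpha_+},|\lambda|)$ is exactly $I_{\alpha_+}$ itself, so $I_{\alpha_+}$ lies entirely outside $I'$ and cannot be ``split by $T'$''. What actually happens is that $I_{\alpha_-}$ (last in the $\pi_1$-order, hence $T(I_{\alpha_-})\supset I_{\alpha_+}$) is split: its left part of length $\lambda_{\alpha_-}-\lambda_{\alpha_+}$ becomes $I'_{\alpha_-}$ with return time $1$, and its right part of length $\lambda_{\alpha_+}$ becomes $I'_{\alpha_+}$ (recall $\pi'_0$ inserts $\alpha_+$ immediately after $\alpha_-$), with return time $2$ and intermediate visit to $I_{\alpha_+}$ --- not to $I_{\alpha_-}$ as you wrote. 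The Birkhoff sum on $I'_{\alpha_+}$ is then $h_{\alpha_-}+h_{\alpha_+}=(\Theta_{\pi,1}h)_{\alpha_+}$, so the conclusion you reach is correct, but the geometric description must be fixed; as written it would not survive being checked against the definitions. With that correction the proof is complete.
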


\section{Special flows over irrational rotations and exchanges of three intervals}
Let $A\subset \R$ be an additive subgroup. A collection of real
numbers $x_1,\ldots,x_k$ is called independent over $A$ if
$a_1x_1+\ldots+a_kx_k=0$ for $a_1,\ldots,a_k\in A$ implies
$a_1=\ldots=a_k=0$.

\begin{remark}\label{uwagagest}
Let $T_{\alpha}:[0,1)\to[0,1)$ be an ergodic rotation
$T_{\alpha}x=x+\alpha$.  Since the set $\Q+\Q\alpha$ is countable,
the set of all $(x_1,\ldots,x_k)\in\R^k$ such that
$x_1,\ldots,x_k$ are independent over $\Q+\Q\alpha$ is $G_\delta$
and dense. Denote by $DC_1$ the set of irrational numbers
$\alpha\in[0,1)$  which satisfy the following Diophantine
condition: there exists $c>0$ such that $|p-q\alpha|>c/q$ for all
$p\in\Z$ and $q\in\Z\setminus\{0\}$. Since $DC_1$ is dense in
$[0,1)$, the set $$\mathfrak{M}=\{(\alpha,\xi)\in[0,1)^2:\alpha\in
DC_1,\xi\in(\Q+\Q\alpha)\setminus(\Z+\Z\alpha)\}$$ is dense in
$[0,1)^2$.
\end{remark}
Given $\mathcal{S}=(S_t)_{t\in \R}$ a measure-preserving flow  and
$s>0$, we  denote by $\mathcal{S}^s$ the flow $(S_{st})_{t\in
\R}$. As a consequence of Theorem~1.1 in \cite{Fr-Le-Le} and
Corollary~23 in \cite{Fr-Le} we obtain the following.
\begin{proposition}\label{mild}
Let $(\alpha,\xi)\in \mathfrak{M}$ and let $a_1,a_2,a_3\in\R$ be
independent over $\Q+\Q\alpha$ and such that
$f=a_1+a_2\chi_{[0,\xi)}+a_3\chi_{[0,1-\alpha)}>0$. Then the
special flow built over  $T_\alpha$ and under the roof function
$f$ is mildly mixing. Moreover, the flows $T_\alpha^f$ and
$(T_\alpha^f)^s$ are not isomorphic for all positive $s\neq 1$.
\end{proposition}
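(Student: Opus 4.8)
The plan is to reduce Proposition~\ref{mild} to the two cited results, so the work is mostly bookkeeping to check that the hypotheses of those results are met. First I would recall the statement of Theorem~1.1 in \cite{Fr-Le-Le}: it asserts mild mixing of a special flow $T_\alpha^f$ precisely when $\alpha$ is of a suitable Diophantine type, the roof function $f$ is a step function of the prescribed shape (constant plus one jump at $1-\alpha$ and one jump at $\xi$), the jump point $\xi$ lies in $(\Q+\Q\alpha)\setminus(\Z+\Z\alpha)$, and the values (or, equivalently, the jump sizes) $a_1,a_2,a_3$ satisfy an independence condition over $\Q+\Q\alpha$. The definition of $\mathfrak{M}$ in Remark~\ref{uwagagest} was tailored so that $(\alpha,\xi)\in\mathfrak{M}$ supplies exactly the Diophantine assumption $\alpha\in DC_1$ and the rationality-relative-to-$\alpha$ assumption on $\xi$; the remaining hypothesis on $a_1,a_2,a_3$ is imposed directly in the statement, together with positivity of $f$ which is what makes $T_\alpha^f$ a genuine finite-measure-preserving flow. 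So the mild mixing assertion follows by citing Theorem~1.1 of \cite{Fr-Le-Le} once one matches the parametrization (constant term versus value on an interval) correctly.

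For the second assertion — that $T_\alpha^f$ and $(T_\alpha^f)^s$ are non-isomorphic for positive $s\neq 1$ — I would invoke Corollary~23 in \cite{Fr-Le}. The natural strategy is: a time-change by the factor $s$ multiplies the roof function by $s$, so $(T_\alpha^f)^s$ is isomorphic to the special flow $T_\alpha^{sf}$ over the same base rotation. The cited corollary should give a rigidity/uniqueness statement for the ``speed'' of such special flows — e.g.\ that the only self-similarities $T_\alpha^f\cong T_\alpha^{sf}$ occur at $s=1$, again under the independence hypothesis on the $a_i$ over $\Q+\Q\alpha$ (which prevents the jump data of $sf$ from being a rational-over-$\alpha$ rescaling that could be absorbed by a coboundary). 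I expect the argument there to go through an analysis of the weak closure of the flow, its rigidity times, or a cohomological invariant attached to the jumps, all of which are insensitive to the parametrization I am using.

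The main obstacle — really the only point requiring care rather than quotation — is the translation between conventions. The roof function here is written as $f = a_1 + a_2\chi_{[0,\xi)} + a_3\chi_{[0,1-\alpha)}$, i.e.\ a sum of a constant and two indicator functions, whereas \cite{Fr-Le-Le} may phrase its hypothesis in terms of the three \emph{values} $f$ takes on the three subintervals of $[0,1)$ cut out by $\xi$ and $1-\alpha$, or in terms of the two \emph{jumps} of $f$. One must verify that ``$a_1,a_2,a_3$ independent over $\Q+\Q\alpha$'' is equivalent (or at least implies what is needed) under whichever of these parametrizations the cited theorem uses; since the change of coordinates between $(a_1,a_2,a_3)$ and the vector of interval-values (or of jumps together with the mean) is given by an invertible integer matrix, $\Q$-linear independence is preserved, and independence over $\Q+\Q\alpha$ is likewise preserved because $\Q+\Q\alpha$ is a $\Q$-vector space. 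One also has to double-check that the \emph{ordering} of the jump points — whether $\xi < 1-\alpha$ or $\xi > 1-\alpha$ — is allowed in full generality by \cite{Fr-Le-Le}, or else note that the remaining case is handled symmetrically (e.g.\ by conjugating with $x\mapsto -x$, which swaps $\alpha\leftrightarrow 1-\alpha$ and sends $\xi\leftrightarrow$ something still in $(\Q+\Q\alpha)\setminus(\Z+\Z\alpha)$). Once these identifications are recorded, the proposition is immediate from the two references.
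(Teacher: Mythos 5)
Your proposal matches the paper exactly: the paper offers no argument beyond the sentence "As a consequence of Theorem~1.1 in \cite{Fr-Le-Le} and Corollary~23 in \cite{Fr-Le} we obtain the following," so your plan of citing those two results (with the parametrization bookkeeping you describe) is precisely the paper's route, just spelled out in more detail.
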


Let $\mathcal{A}=\{a,b,c\}$,
\[\pi_s=\left(\begin{array}{cccc}a&b&c\\c&b&a\end{array}\right),
\;\;\pi_l=\left(\begin{array}{cccc}a&b&c\\b&c&a\end{array}\right),\;\;\pi_r=\left(\begin{array}{cccc}a&b&c\\c&a&b\end{array}\right)
\text{ and }
\]

\[\Lambda^l_{\mathcal{A}}=\Lambda^r_{\mathcal{A}}=\Lambda_{\mathcal{A}},\;\;\Lambda^0_{\mathcal{A}}=\{\lambda\in\Lambda_{\mathcal{A}}:\lambda_a<\lambda_c\},
\;\;\Lambda^1_{\mathcal{A}}=\{\lambda\in\Lambda_{\mathcal{A}}:\lambda_a>\lambda_c\}.\]
 Let us consider four functions
$\rho_\gamma:\Lambda^\gamma_{\mathcal{A}}\to[0,1]^2$,
$\gamma\in\{l,r,0,1\}$ defined by
\[\rho_l(x_a,x_b,x_c)=(1-x_a,1-x_c), \;\;\rho_r(x_a,x_b,x_c)=(x_c,x_a),\]
\[\rho_0(x_a,x_b,x_c)=\left(\frac{x_c-x_a}{1-x_a},\frac{x_a}{1-x_a}\right),\;\;\rho_1(x_a,x_b,x_c)=\left(\frac{1-x_a}{1-x_c},\frac{x_a}{1-x_c}\right).\]
Obviously,
$\rho_\gamma:\Lambda^\gamma_{\mathcal{A}}\to\rho(\Lambda^\gamma_{\mathcal{A}})$
is a $C^\infty$--diffeomorphism and
$\rho_\gamma(\Lambda^\gamma_{\mathcal{A}})\subset[0,1]^2$ is open
for $\gamma=l,r,0,1$. Let
\[\gamma({\pi},{\lambda})=\left\{
\begin{array}{ccl}
l&\text{ if }& {\pi}=\pi_l\\
r&\text{ if }& {\pi}=\pi_r\\
0&\text{ if }& {\pi}=\pi_s \text{ and
}\lambda\in \Lambda^0_{\mathcal{A}}\\
1&\text{ if }& {\pi}=\pi_s \text{ and
}\lambda\in \Lambda^1_{\mathcal{A}}.\\
\end{array} \right.\]
Let us consider $\rho:\mathcal{P}^0_{\mathcal{A}}\times
\Lambda_{\mathcal{A}}\to[0,1]^2$ given by
$\rho(\pi,\lambda)=\rho_{\gamma({\pi},{\lambda})}(\lambda)$. We
will use the notation $(\alpha(\pi,\lambda),\xi(\pi,\lambda))$ for
$\rho(\pi,\lambda)$.
\begin{corollary}\label{wniosek}
For every $\pi\in\mathcal{P}^0_{\mathcal{A}}$,
$\lambda\in\Lambda_{\mathcal{A}}$ and $h\in\R_+^{\mathcal{A}}$ if
$\rho(\pi,\lambda)\in\mathfrak{M}$ and $h_1,h_2,h_3$ are
independent over $\Q+\Q\alpha(\pi,\lambda)$ then the special flow
$T_{(\pi,\lambda)}^{f_h}$ is mildly mixing.
\end{corollary}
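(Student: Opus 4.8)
The plan is to reduce $T_{(\pi,\lambda)}^{f_h}$ to one of the special flows covered by Proposition~\ref{mild} by applying one step of the (extended) Rauzy-Veech induction together with the rescaling that renormalizes the base to the unit interval. Recall that a three-interval exchange $T_{(\pi,\lambda)}$ is, after one Rauzy-Veech step, the first return map of itself to an interval of length $1-\min(\lambda_{\pi_0^{-1}(3)},\lambda_{\pi_1^{-1}(3)})$; rescaling that interval to $[0,1)$ turns the induced IET into a rotation $T_\alpha$ on $[0,1)$. The four maps $\rho_\gamma$ are precisely the bookkeeping of this: for each of the four cases in $\gamma(\pi,\lambda)$ (the two Rauzy classes of standard pairs $\pi_l,\pi_r$, and the two types $0,1$ of the standard pair $\pi_s$) the first return map of $T_{(\pi,\lambda)}$ to the relevant subinterval, after rescaling, is the rotation by $\alpha=\alpha(\pi,\lambda)$, while the partition of $[0,1)$ into the return-time classes has its break points at $1-\alpha$ and at $\xi=\xi(\pi,\lambda)$. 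One checks case by case that $\rho_\gamma$ is exactly this rescaling; this is a short direct computation and I would present it only for one representative $\gamma$ and leave the rest to the reader, noting that $\rho_\gamma$ is a $C^\infty$-diffeomorphism onto an open subset of $[0,1]^2$ as already stated.

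Next I invoke Lemma~\ref{przejpot} (and, if more than one induction step is needed to reach a standard pair, its iterate via Proposition~\ref{Rauzy}): the special flow $T_{(\pi,\lambda)}^{f_h}$ is isomorphic to $T_{\hat R(\pi,\lambda)}^{f_{\Theta(\pi,\lambda)h}}$, and iterating, to a special flow over a rotation $T_\alpha$ with $\alpha=\alpha(\pi,\lambda)$ under a roof function that is constant on each of the three atoms of the return-time partition, i.e.\ of the form $b_1+b_2\chi_{[0,\xi)}+b_3\chi_{[0,1-\alpha)}$ with $\xi=\xi(\pi,\lambda)$. The new heights $b=(b_1,b_2,b_3)$ are obtained from $h=(h_1,h_2,h_3)$ by a fixed invertible integer matrix $\Theta(\pi,\lambda)$ (a product of Rauzy-Veech cocycle matrices), so they are nonnegative integer linear combinations of $h_1,h_2,h_3$ with a matrix that has an integer inverse. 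Consequently $b_1,b_2,b_3$ are independent over $\Q+\Q\alpha$ if and only if $h_1,h_2,h_3$ are: an integral linear relation among the $b_i$ with coefficients in $\Q+\Q\alpha$ pulls back, through $\Theta(\pi,\lambda)^*$, to one among the $h_i$, and conversely. Positivity of the roof is automatic since $f_h>0$ on $I$ and the induced roof is a sum of values of $f_h$ along orbit segments.

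Having arranged the data, I apply Proposition~\ref{mild}: since $\rho(\pi,\lambda)=(\alpha,\xi)\in\mathfrak M$ by hypothesis and $b_1,b_2,b_3$ are independent over $\Q+\Q\alpha$ and the roof is positive, the special flow over $T_\alpha$ under $b_1+b_2\chi_{[0,\xi)}+b_3\chi_{[0,1-\alpha)}$ is mildly mixing; mild mixing is an isomorphism invariant, so $T_{(\pi,\lambda)}^{f_h}$ is mildly mixing as well. The one genuinely fiddly point — and the step I expect to be the main obstacle — is verifying the case analysis that identifies $\rho_\gamma(\lambda)$ with $(\alpha,\xi)$ and confirms that after the appropriate number of Rauzy-Veech steps the induced roof has breaks exactly at $1-\alpha$ and $\xi$ and not at other points; in the type-$0$ and type-$1$ cases for $\pi_s$ one must be careful about which of $\lambda_a,\lambda_c$ is cut and hence how the three atoms of the return partition sit inside $[0,1)$, but this is a finite check with three intervals and a fixed small Rauzy diagram, so it is mechanical rather than deep.
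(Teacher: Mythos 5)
Your proposal is correct and follows essentially the same route as the paper: reduce to a rotation base using Lemma~\ref{przejpot} (at most one Rauzy--Veech step), rewrite the roof with jumps at $\xi(\pi,\lambda)$ and $1-\alpha(\pi,\lambda)$, note that the resulting unimodular integer change of heights preserves independence over $\Q+\Q\alpha(\pi,\lambda)$, and apply Proposition~\ref{mild}. The only imprecision is your phrasing that one induction step ``turns the induced IET into a rotation'' --- Rauzy--Veech induction keeps $d=3$ intervals, and the actual mechanism (as in the paper) is that $\pi_l$ and $\pi_r$ are already rotations with the three exchanged intervals refining the rotation's partition, while $\pi_s$ of type $0$ or $1$ lands on $\pi_r$ or $\pi_l$ after a single step; this is exactly the finite case check you defer, so the argument goes through.
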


\begin{proof}We will prove the claim for the cases $r$ and $0$. In the
remaining cases  the proof is similar, and we leave it to the
reader.

Suppose that $\pi=\pi_{r}$, $\lambda\in\Lambda_{\mathcal{A}}$ and
$h\in\R_+^{\mathcal{A}}$. Then $T_{(\pi,\lambda)}$ is isomorphic
to the circle rotation by $\lambda_c=\alpha(\pi,\lambda)$ and
\begin{eqnarray*}f_h&=&h_a+(h_b-h_a)\chi_{[0,\lambda_a)}+(h_c-h_b)\chi_{[0,1-\lambda_c)}\\
&=&h_a+(h_b-h_a)\chi_{[0,\xi(\pi,\lambda))}+(h_c-h_b)\chi_{[0,1-\alpha(\pi,\lambda))}.\end{eqnarray*}
Suppose that
$\rho(\pi,\lambda)=(\alpha(\pi,\lambda),\xi(\pi,\lambda))\in\mathfrak{M}$
and $h_a,h_b,h_c$ are independent over $\Q+\Q\alpha(\pi,\lambda)$.
Then $h_a,h_b-h_a,h_c-h_b$ are independent over
$\Q+\Q\alpha(\pi,\lambda)$. Now Proposition~\ref{mild} implies the
mild mixing of $T_{(\pi,\lambda)}^{f_h}$.

Next, suppose that $\pi=\pi_{s}$,
$\lambda\in\Lambda^0_{\mathcal{A}}$, $h\in\R_+^{\mathcal{A}}$.
Then $(\pi,\lambda)$ has type $0$. Let
$(\pi',\lambda')=\hat{R}(\pi,\lambda)$ and
$h'=\Theta(\pi,\lambda)h$. Thus $\pi'=\pi_r$,
\[\lambda'=\Theta(\pi,\lambda)^{-1*}\lambda=(\lambda_a,\lambda_b,\lambda_c-\lambda_a)\text{ and }
h'=(h_a+h_c,h_b,h_c).\] By Lemma~\ref{przejpot}, the special flows
$T_{(\pi,\lambda)}^{f_h}$ and $T_{(\pi_r,\lambda')}^{f_{h'}}$ are
isomorphic. Note that
\[
\rho(\pi_r,\lambda'/|\lambda'|)=\rho_r(\lambda'/|\lambda'|)
=\rho_r\left(\frac{\lambda_a}{1-\lambda_a},\frac{\lambda_b}{1-\lambda_a},\frac{\lambda_c-\lambda_a}{1-\lambda_a}\right)
=\rho_0(\lambda)=\rho(\pi,\lambda).\]
 Suppose that
$\rho(\pi,\lambda)\in\mathfrak{M}$ and $h_a,h_b,h_c$ are
independent over $\Q+\Q\alpha(\pi,\lambda)$. It follows that
$h_a+h_c,h_b,h_c$ are independent over $\Q+\Q\alpha(\pi,\lambda)$.
Since $\rho(\pi_r,\lambda'/|\lambda'|)=\rho(\pi,\lambda)$ and
$\alpha(\pi_r,\lambda'/|\lambda'|)=\alpha(\pi,\lambda)$, we have
$\rho(\pi_r,\lambda'/|\lambda'|)\in\mathfrak{M}$ and
$h'_a,h'_b,h'_c$ are independent over
$\Q+\Q\alpha(\pi_r,\lambda'/|\lambda'|)$. By the first part of the
proof, the special flow $T_{(\pi_r,\lambda'/|\lambda'|)}^{f_{h'}}$
is mildly mixing. It follows that $T_{(\pi_r,\lambda')}^{f_{h'}}$,
and hence $T_{(\pi,\lambda)}^{f_h}$, is mildly mixing.
\end{proof}
\section{Mild mixing of vertical flows}
Let $\|x\|=\sum_{\alpha\in\mathcal{A}}|x_\alpha|$ for every
$x\in\R^{\mathcal{A}}$. For every  matrix
$B=[b_{\alpha\,\beta}]_{\alpha\,\beta\in\mathcal{A}}$ with
positive entries let
$\nu(B)=\max_{\alpha,\beta,\gamma\in\mathcal{A}}{b_{\alpha\,\beta}}/{b_{\alpha\,\gamma}}.$
 Then
\begin{equation}\label{nierlip}
\left\|\frac{B\lambda'}{|B\lambda|}-\frac{B\lambda}{|B\lambda|}\right\|\leq
\nu(B)^2\|\lambda-\lambda'\|\text{ for all
}\lambda,\lambda'\in\Lambda_{\mathcal{A}}.
\end{equation}

We will denote by $\Arg:\C\setminus\{0\}\to(-\pi,\pi]$ the
principal argument function. Recall that for every $z_1,z_2$ with
nonnegative real parts we have $\Arg(z_1+z_2)=\Arg z_1+\Arg z_2$
and $\Arg(\overline{z}_1)=-\Arg(z_1)$.

 Let
$\mathcal{A}=\{1,\ldots,d\}$, $d\geq 4$. Assume that
$\overline{\pi}\in\mathcal{P}^0_{\mathcal{A}}$ is a standard pair
such  that $\bar{\pi}_0$ is the identity. Let
\[Z(\bar{\pi})=\{(\bar{\pi},\lambda,\tau):
\lambda_1=\ldots=\lambda_{d-3}=0,(\bar{\pi},\lambda,\tau)\in
\hat{\mathcal{H}}(C),\tau\in\mathcal{T}^+_{\pi,\lambda}\}.\]

\begin{lemma}\label{prostogen}
The set $\{M(\bar{\pi},\lambda,\tau):(\bar{\pi},\lambda,\tau)\in
Z(\bar{\pi})\}$ is dense in $M(\hat{\mathcal{H}}(C))$.
\end{lemma}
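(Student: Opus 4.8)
The plan is to use the ergodicity and conservativity of the Rauzy–Veech renormalization map $\mathcal{R}$ on $\mathcal{H}(C)$ (Theorem~\ref{ergodicity}) to show that a.e.\ $(\pi,\lambda,\tau)\in\hat{\mathcal{H}}(C)$ can be pushed, by finitely many steps of the extended Rauzy–Veech induction $\hat{\mathcal{R}}$, to a point whose $\lambda$-coordinate has the shape dictated by $Z(\bar\pi)$ -- that is, all but the last three coordinates of $\lambda$ are small relative to the rest. Since $\hat{\mathcal{R}}$ preserves the element of the moduli space (the Lemma after the definition of $\hat{\mathcal{R}}$), and since by Proposition~\ref{contm} the map $(\pi,\lambda,\tau)\mapsto M(\pi,\lambda,\tau)$ is continuous, letting those small coordinates go to $0$ produces, in the limit, an Abelian differential of the form $M(\bar\pi,\lambda_\infty,\tau_\infty)$ with $(\bar\pi,\lambda_\infty,\tau_\infty)\in Z(\bar\pi)$, and by continuity this limit is arbitrarily close to the original $M(\pi,\lambda,\tau)$. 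Since $M(\hat{\mathcal{H}}(C))$ is, up to a null set, the closure of $\{M(\pi,\lambda,\tau):(\pi,\lambda,\tau)\in\hat{\mathcal{H}}(C)\}$, density of the image of $Z(\bar\pi)$ follows.

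Concretely, first I would fix the standard pair $\bar\pi$ with $\bar\pi_0=\mathrm{id}$ and, using Proposition~\ref{Rauzy} together with the fact that $C\times\R_+^{\mathcal{A}}$ is $\hat{R}$-invariant, arrange that $\bar\pi$ lies in the given Rauzy class $C$. Next, by Theorem~\ref{ergodicity} and the Poincaré recurrence theorem applied to the conservative ergodic system $(\mathcal{H}(C),m,\mathcal{R})$, for $m$-a.e.\ point the $\mathcal{R}$-orbit returns infinitely often to any fixed positive-measure neighbourhood; in particular I would choose a small neighbourhood $V$ of a point of the form $(\bar\pi,\lambda^0,\tau^0)$ inside $\mathcal{H}(C)$ and deduce that a.e.\ $(\pi,\lambda,\tau)$ satisfies $\hat{\mathcal{R}}^n(\pi,\lambda,\tau)=(\bar\pi,\lambda^{(n)},\tau^{(n)})$ for infinitely many $n$ with $\lambda^{(n)}/|\lambda^{(n)}|$ close to $\lambda^0$. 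The point of choosing $\lambda^0$ near the face $\{\lambda_1=\dots=\lambda_{d-3}=0\}$ of the simplex is that then $(\bar\pi,\lambda^{(n)}/|\lambda^{(n)}|,\tau^{(n)}|\lambda^{(n)}|)$ is close to a point of $Z(\bar\pi)$, and Proposition~\ref{contm} converts this proximity in parameter space into proximity of the corresponding Abelian differentials. Finally I would invoke the scaling relation \eqref{obrot} (equivalently the Teichmüller flow, which acts on moduli space via $\mathcal{U}_s$-orbits only up to rotation, hence does not move the point of $M(\hat{\mathcal{H}}(C))$) to normalise $|\lambda|=1$ and stay inside the cross-section, and check that the cone condition $\tau\in\mathcal{T}^+_{\pi,\lambda}$ degenerates correctly to $\tau\in\mathcal{T}^+_{\bar\pi,\lambda_\infty}$ in the limit -- this is where condition \eqref{dodatla} is used.

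The main obstacle I expect is the last point: controlling the $\tau$-coordinate in the limit. Recurrence for $\mathcal{R}$ only tells us that $\lambda^{(n)}/|\lambda^{(n)}|$ returns near $\lambda^0$; the $\tau$-coordinate is transported by the (unbounded) cocycle $\Theta^{(n)}$ and need not return anywhere. The remedy is that we do not need $\tau^{(n)}$ to return -- we only need the \emph{element of moduli space} $M(\bar\pi,\lambda^{(n)},\tau^{(n)})$, which equals $M(\pi,\lambda,\tau)$ for all $n$, to be approximable; so it suffices to produce, near the limiting $\lambda_\infty$, \emph{some} $\tau_\infty\in\mathcal{T}^+_{\bar\pi,\lambda_\infty}$ with $M(\bar\pi,\lambda_\infty,\tau_\infty)$ close to $M(\pi,\lambda,\tau)$, and for this one uses that the map in Proposition~\ref{contm} is continuous and open enough on $\hat{\mathcal{H}}(\bar\pi)$, so that the open set $\{M(\bar\pi,\lambda,\tau):\lambda_1=\dots=\lambda_{d-3}<\varepsilon\}$ accumulates on $M(\bar\pi,\lambda^{(n)},\tau^{(n)})$ as $\varepsilon\to0$. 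Thus the real work is bookkeeping with the inequalities \eqref{nierlip} and the Lipschitz estimate on $\nu(\Theta^{(n)})$, rather than any new idea; the argument is essentially an application of ergodicity of Rauzy–Veech induction in the style of Veech.
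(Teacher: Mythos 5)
Your starting point (ergodicity of the Rauzy--Veech renormalization plus a Veech-style recurrence argument) coincides with the paper's, but the proposal has a genuine gap at the decisive step: how to land \emph{exactly} on $Z(\bar\pi)$, where $\lambda_1=\dots=\lambda_{d-3}=0$, while keeping control of the point in moduli space. You propose to let the first $d-3$ coordinates tend to $0$ and invoke continuity of $M$, but Proposition~\ref{contm} only gives continuity on $\{\pi\}\times(\R_+\setminus\{0\})^{\mathcal{A}}\times\mathcal{T}^+_{\pi}$, i.e.\ at points where every $\lambda_\alpha$ is strictly positive; your fallback (``$M$ is continuous and open enough'' near the degenerate face, so that the sets $\{\lambda_1=\dots=\lambda_{d-3}<\vep\}$ accumulate correctly) is precisely what would have to be proved, and nothing in the paper supplies it. The paper avoids the boundary entirely: it chooses the recurrence sets $A_n$ so that the first $d-3$ sides $\zeta_j=\lambda_j+i\tau_j$ have slopes within $1/n$ of one another, perturbs $\lambda$ by $O(1/n)$ to make these sides \emph{exactly parallel}, and then applies the rotation $\mathcal{U}_{\theta_n}$ with $\theta_n\to 0$ (via \eqref{obrot}) to make them exactly vertical. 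The resulting triple lies in $Z(\bar\pi)$ on the nose, and continuity of $M$ is only ever invoked at the interior point $(\pi,\lambda,\tau)$.

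A second, related problem: recurrence of $\mathcal{R}$ brings the \emph{renormalized} coordinates near the face, but $\mathcal{R}^{k_n}(\pi,\lambda,\tau)$ represents $\mathcal{T}_{s_n}(M,\omega)$ for times $s_n\to\infty$, not $(M,\omega)$ itself; so proximity of $\mathcal{R}^{k_n}(\pi,\lambda,\tau)$ to the face says nothing directly about approximating $M(\pi,\lambda,\tau)$. (Relatedly, your parenthetical claim that the $\mathcal{U}_s$-orbit ``does not move the point of $M(\hat{\mathcal{H}}(C))$'' is false; the paper needs $\theta_n\to 0$ exactly because $\mathcal{U}_{\theta}$ does move the point.) The paper resolves this by performing the perturbation in the renormalized chart and transporting it back to time zero: $\lambda^{b\,(n)}=\Theta^{(k_n)}(\pi,\lambda)^*\lambda^{p\,(n)}$ satisfies $\|\lambda^{b\,(n)}-\lambda\|\leq 2\Gamma^2/n$ thanks to the bounded distortion $\nu(\Theta^{(k_n)}(\pi,\lambda)^*)\leq\Gamma$ along the subsequence and \eqref{nierlip}, while the $\tau$-coordinate is never perturbed at all --- so the unboundedness of the cocycle on $\tau$, which you rightly worry about, simply never enters. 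You mention the distortion bound only as ``bookkeeping,'' but it is exactly what converts the approximation at renormalized time $k_n$ into the statement $M(\bar{\pi},\lambda^{r\,(n)},\tau^{r\,(n)})=\mathcal{U}_{\theta_n}M(\pi,\lambda^{b\,(n)},\tau)\to M(\pi,\lambda,\tau)$. Without the parallel-then-rotate construction and this pull-back, the proposal does not actually produce any element of $Z(\bar\pi)$ whose image approximates the given surface.
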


\begin{proof} The proof consists of four steps.
In the first step, using the extended Veech-Rauzy induction,  for
almost every $(\pi,\lambda,\tau)\in\mathcal{H}(C)$ we find a
representation of $M(\pi,\lambda,\tau)$ which is given by
$(\bar{\pi},\lambda^{(n)},\tau^{(n)})=\hat{\mathcal{R}}^{k_n}(\pi,\lambda,\tau)$
so that the first $d-3$ sides of the polygon
$P(\bar{\pi},\lambda^{(n)},\tau^{(n)})$ are almost parallel. In
the second step, $(\bar{\pi},\lambda^{(n)},\tau^{(n)})$ is
perturbed to get $(\bar{\pi},\lambda^{p\,(n)},\tau^{(n)})$ such
that the first $d-3$ sides of the polygon
$P(\bar{\pi},\lambda^{p\,(n)},\tau^{(n)})$ are parallel. To
describe this perturbation we will need two auxiliary substeps
passing by
\[(\tilde{\lambda}^{(n)},\tilde{\tau}^{(n)})=(\lambda^{(n)}/|\lambda^{(n)}|,\tau^{(n)}|\lambda^{(n)}|),\;
(\tilde{\lambda}^{p\,(n)},\tilde{\tau}^{(n)})=(\lambda^{p\,(n)}/|\lambda^{(n)}|,\tau^{(n)}|\lambda^{(n)}|).\]
 In the third step, $(\bar{\pi},\lambda^{p\,(n)},\tau^{(n)})$ is
rotated by an angle $\theta_n$ ($\theta_n\to 0$ as $n\to\infty$)
to obtain
$(\bar{\pi},\lambda^{r\,(n)},\tau^{r\,(n)})\in\hat{\mathcal{H}}(C)$
so that the first $d-3$ sides of the polygon
$P(\bar{\pi},\lambda^{r\,(n)},\tau^{r\,(n)})$ are vertical, hence
$(\bar{\pi},\lambda^{r\,(n)},\tau^{r\,(n)})\in Z(\bar{\pi})$. In
the final step, we show that
$M(\bar{\pi},\lambda^{r\,(n)},\tau^{r\,(n)})\to
M(\pi,\lambda,\tau)$. In order to do this, applying the inverse of
the renormalization, we prove that
$$(\pi,\lambda^{b\,(n)},\tau)=\hat{\mathcal{R}}^{-k_n}(\bar{\pi},\lambda^{p\,(n)},\tau^{(n)})\to
(\pi,\lambda,\tau).$$ In view of (\ref{obrot}), it follows that
\[M(\bar{\pi},\lambda^{r\,(n)},\tau^{r\,(n)})=\mathcal{U}_{\theta_n}M(\bar{\pi},\lambda^{p\,(n)},\tau^{(n)})=
\mathcal{U}_{\theta_n}M(\pi,\lambda^{b\,(n)},\tau)\to
M(\pi,\lambda,\tau).\]

{\bf Step 1.} Let $A_n$ stand for the set of
$(\bar{\pi},\lambda,\tau)\in\mathcal{H}(C)$ such that
\begin{equation}\label{war1}
\lambda_j>0,\;\;\;\frac{\tau_1}{\lambda_1}>1,\;\;\;-\sum_{\bar{\pi}_1(k)\leq
j}\tau_k>\frac{\lambda_1}{\tau_1} \text{ for }j=1,\ldots,d,
\end{equation}
\begin{equation}\label{war2}
\left|\frac{\tau_1}{\lambda_1}-\frac{\tau_j}{\lambda_j}\right|<\frac{1}{n}\text{
for }j=2,\ldots,d-3,\;\;\tau_j<0\text{ for }j=d-2,d-1,d,
\end{equation}
\begin{equation}\label{war3}
\frac{\lambda_1}{\tau_1}\sum_{k=1}^{d-3}\tau_k+\lambda_{d-2}+\lambda_{d-1}<1.
\end{equation}
 Note that if $(\bar{\pi},\lambda,\tau)\in A_n$ then the first $d-3$ sides of the polygon
$P(\bar{\pi},\lambda,\tau)$ are almost parallel. Setting
$\lambda=(1/d,\ldots,1/d)$, $\tau_j=2/d$ for $j=1,\ldots,d-3$,
$\tau_{d-2}=\tau_{d-1}=-1/2d$ and $\tau_d=-3$, since
$\overline{\pi}$ is a standard pair, we get
$(\bar{\pi},\lambda,\tau)\in A_n$. It follows that $A_n$ is a
nonempty open subset of $\mathcal{H}(C)$ and hence $m(A_n)>0$.

By Theorem~\ref{ergodicity}, using standard Veech arguments (see
\cite[Ch.\ 3]{Ve2}), there exists $\Gamma>0$ and a measurable
subset $B\subset\mathcal{H}(C)$ such that $m(B^c)=0$ and for every
$(\pi,\lambda,\tau)\in B$ there exists a sequence $k_n\to+\infty$
such that $\mathcal{R}^{k_n}(\pi,\lambda,\tau)\in A_n$,
$\Theta^{(k_n)}(\pi,\lambda)$ has positive entries and
$\nu(\Theta^{(k_n)}(\pi,\lambda)^*)\leq \Gamma$. Let
$(\bar{\pi},\lambda^{(n)},\tau^{(n)})=\hat{\mathcal{R}}^{k_n}(\pi,\lambda,\tau)$
and
$$(\bar{\pi},\tilde{\lambda}^{(n)},\tilde{\tau}^{(n)})=\mathcal{R}^{k_n}(\pi,\lambda,\tau)=(\bar{\pi},\lambda^{(n)}/|\lambda^{(n)}|,\tau^{(n)}|\lambda^{(n)}|).$$
Since  $(\bar{\pi},\tilde{\lambda}^{(n)},\tilde{\tau}^{(n)})\in
A_n$, we have
\begin{equation}\label{jakies1}
\tilde{\lambda}^{(n)}_j>0,\;\;\;\frac{\tilde{\tau}^{(n)}_1}{\tilde{\lambda}^{(n)}_1}>1,\;\;\;-\sum_{\bar{\pi}_1(k)\leq
j}\tilde{\tau}^{(n)}_k>\frac{\tilde{\lambda}^{(n)}_1}{\tilde{\tau}^{(n)}_1}\text{
for  }j=1,\ldots,d,
\end{equation}
\begin{equation}\label{jakies2}
\left|\frac{\tilde{\tau}^{(n)}_1}{\tilde{\lambda}^{(n)}_1}-\frac{\tilde{\tau}^{(n)}_j}{\tilde{\lambda}^{(n)}_j}\right|<\frac{1}{n}\text{
for }j=2,\ldots,d-3,\;\;\tilde{\tau}^{(n)}_j<0\text{ for
}j=d-2,d-1,d,
\end{equation}
\begin{equation}\label{jakies3}
\frac{\tilde{\lambda}^{(n)}_1}{\tilde{\tau}^{(n)}_1}\sum_{k=1}^{d-3}\tilde{\tau}^{(n)}_k+\tilde{\lambda}^{(n)}_{d-2}+\tilde{\lambda}^{(n)}_{d-1}<1.
\end{equation}
  Moreover,
$\tau^{(n)}\in H^+_{\bar{\pi}}$. From (\ref{jakies1}) and
(\ref{jakies2}), we have $\tilde{\tau}^{(n)}_j>0$ for
$j=1,\ldots,d-3$.

{\bf Step 2.} Let us consider
$\tilde{\lambda}^{p\,(n)}\in\R^{\mathcal{A}}$ with
\[ \tilde{\lambda}^{p\,(n)}_j=\left\{\begin{array}{cll}
\frac{\tilde{\lambda}^{(n)}_1}{\tilde{\tau}^{(n)}_1}\tilde{\tau}^{(n)}_j&\text{
if }&j=1,\ldots,d-3\\
\tilde{\lambda}^{(n)}_j&\text{ if }&j=d-2,d-1\\
1-\sum_{j=1}^{d-1}\tilde{\lambda}^{p\,(n)}_j&\text{ if }& j=d.
\end{array}\right.\]
It follows from (\ref{jakies3}) that
$\tilde{\lambda}^{p\,(n)}\in\Lambda_{\mathcal{A}}$. Since
$\left|\frac{\tilde{\tau}^{(n)}_j}{\tilde{\lambda}^{p\,(n)}_j}-\frac{\tilde{\tau}^{(n)}_j}
{\tilde{\lambda}^{(n)}_j}\right|<\frac{1}{n}$ and
$\frac{\tilde{\lambda}^{(n)}_1}{\tilde{\tau}^{(n)}_1}<1$, we
obtain
\[|\tilde{\lambda}^{p\,(n)}_j-\tilde{\lambda}^{(n)}_j|<
\frac{1}{n}\frac{\tilde{\lambda}^{p\,(n)}_j
\tilde{\lambda}^{(n)}_j}{\tilde{\tau}^{(n)}_j}=\frac{\tilde{\lambda}^{(n)}_j}{n}\frac{\tilde{\lambda}^{(n)}_1
}{\tilde{\tau}^{(n)}_1}<\frac{\tilde{\lambda}^{(n)}_j}{n}\text{
for }j=1,\ldots,d-3,\] and hence
$|\tilde{\lambda}^{p\,(n)}_d-\tilde{\lambda}^{(n)}_d|<1/n$.
Therefore,
$\|\tilde{\lambda}^{p\,(n)}-\tilde{\lambda}^{(n)}\|<2/n$.
Moreover, by (\ref{jakies1}),
\begin{equation}\label{zredjed}
-\frac{\sum_{\bar{\pi}_1(k)\leq
j}\tilde{\tau}^{(n)}_k}{\sum_{\bar{\pi}_1(k)\leq j}
\tilde{\lambda}^{p\,(n)}_k}>-\sum_{\bar{\pi}_1(k)\leq
j}\tilde{\tau}^{(n)}_k>\frac{\tilde{\lambda}^{(n)}_1}{\tilde{\tau}^{(n)}_1}\text{
for }j=1,\ldots,d.
\end{equation}
Let $\lambda^{p\,(n)}=|\lambda^{(n)}|\tilde{\lambda}^{p\,(n)}$. As
$\tau^{(n)}\in H^+_{\bar{\pi}}$, we have
$(\bar{\pi},\lambda^{p\,(n)},\tau^{(n)})\in\hat{\mathcal{H}}(C)$.
Since $\tau^{(n)}=\tilde{\tau}^{(n)}/|\lambda^{(n)}|$, by
(\ref{jakies1}), (\ref{jakies2}) and (\ref{zredjed}), we obtain
\begin{equation}\label{slope}
\frac{\tau^{(n)}_j}{\lambda^{p\,(n)}_j}=\frac{\tau^{(n)}_1}{\lambda^{(n)}_1}>\frac{1}{|\lambda^{(n)}|^2}\text{
for } j=1,\ldots,d-3,\;\;\tau^{(n)}_j<0\text{ for }j=d-2,d-1,d
\end{equation}
and
\begin{equation}\label{jeszczeslope}
\begin{split}-\frac{\sum_{\bar{\pi}_1(k)\leq
j}{\tau}^{(n)}_k}{\sum_{\bar{\pi}_1(k)\leq j}
{\lambda}^{p\,(n)}_k}&=-\frac{\sum_{\bar{\pi}_1(k)\leq
j}\tilde{\tau}^{(n)}_k}{\sum_{\bar{\pi}_1(k)\leq j}
\tilde{\lambda}^{p\,(n)}_k}\frac{1}{|\lambda^{(n)}|^2}\\&>
\frac{\tilde{\lambda}^{(n)}_1}{\tilde{\tau}^{(n)}_1}\frac{1}{|\lambda^{(n)}|^2}
=\frac{{\lambda}^{(n)}_1}{{\tau}^{(n)}_1}\frac{1}{|\lambda^{(n)}|^4}>\frac{{\lambda}^{(n)}_1}{{\tau}^{(n)}_1}
\end{split}\end{equation} for $j=1,\ldots,d$.

{\bf Step 3.} Let
\[\theta_n=\pi/2-\Arg (\lambda^{(n)}_1+i\tau^{(n)}_1)=\Arg(\tau_1^{(n)}+i\lambda_1^{(n)})>0.\] Since
$|\lambda^{(n)}|\to 0$, by (\ref{slope}), we obtain $\theta_n\to
0$. Let $$\lambda^{r\,(n)}+i\tau^{r\,(n)}=e^{ i\theta_n
}(\lambda^{p\,(n)}+i\tau^{(n)}).$$ In this step we will prove that
$(\bar{\pi},\lambda^{r\,(n)},\tau^{r\,(n)})\in Z(\bar{\pi})$. As
$\Arg(\lambda^{p\,(n)}_j+i\tau^{(n)}_j)=\Arg(\lambda^{(n)}_1+i\tau^{(n)}_1)$
for $j=1,\ldots,d-3$ and
$-\pi/2<\Arg(\lambda^{p\,(n)}_j+i\tau^{(n)}_j)<0$ for
$j=d-2,d-1,d$, we have
\begin{equation*}\Arg({\lambda}_j^{r\,(n)}+i{\tau}_j^{r\,(n)})=\Arg(\lambda^{p\,(n)}_j+i\tau^{(n)}_j)+\pi/2-\Arg
(\lambda^{(n)}_1+i\tau^{(n)}_1)=\pi/2
\end{equation*} for
$j=1,\ldots,d-3$ and
\[-\pi/2<\Arg({\lambda}^{r\,(n)}_j+i{\tau}^{r\,(n)}_j)=\Arg(\lambda^{p\,(n)}_j+i\tau^{(n)}_j)+\pi/2-\Arg
(\lambda^{(n)}_1+i\tau^{(n)}_1)<\pi/2\] for $j=d-2,d-1,d$. It
follows that
\begin{equation}\label{zera}{\lambda}^{r\,(n)}_j=0,\;{\tau}^{r\,(n)}_j>0\text{ for }j=1,\ldots,d-3\text{
and }{\lambda}^{r\,(n)}_j>0\text{ for }j=d-2,d-1,d.
\end{equation} Since
$\tau^{(n)}\in H^+_{\bar{\pi}}$, we have
$0<\Arg(\sum_{k=1}^j{\lambda}_k^{p\,(n)}+i{\tau}_k^{(n)})<\pi/2$,
and hence
\[\Arg(\sum_{k=1}^j{\lambda}^{r\,(n)}_k+i{\tau}^{r\,(n)}_k)=\Arg(\sum_{k=1}^j{\lambda}_k^{p\,(n)}+i{\tau}_k^{(n)})+\theta_n>
\Arg(\sum_{k=1}^j{\lambda}_k^{p\,(n)}+i{\tau}_k^{(n)})>0\] for
$j=1,\ldots,d-1$. Therefore, $\sum_{k=1}^j{\tau}^{r\,(n)}_j>0$ for
$j=1,\ldots,d-1$. By (\ref{jeszczeslope}),
\begin{eqnarray*}0&>&\Arg(\sum_{\bar{\pi}_1(k)\leq
j}{\lambda}_k^{p\,(n)}+i{\tau}_k^{(n)})+\Arg(\tau_1^{(n)}+i\lambda_1^{(n)})\\
&=& \Arg(\sum_{\bar{\pi}_1(k)\leq
j}{\lambda}_k^{p\,(n)}+i{\tau}_k^{(n)})+\theta_n=\Arg(\sum_{\bar{\pi}_1(k)\leq
j}{\lambda}^{r\,(n)}_k+i{\tau}^{r\,(n)}_k)
\end{eqnarray*}
and hence $\sum_{\bar{\pi}_1(k)\leq j}{\tau}^{r\,(n)}_k<0$ for all
$j=1,\ldots,d$. Therefore,
$(\bar{\pi},\lambda^{r\,(n)},\tau^{r\,(n)})\in\hat{\mathcal{H}}(C)$.
In view of (\ref{zera}), it follows that
$(\bar{\pi},\lambda^{r\,(n)},\tau^{r\,(n)})\in Z(\overline{\pi})$.

{\bf Step 4.}   Let
\[\lambda^{b\,(n)}=\Theta^{(k_n)}(\pi,\lambda)^*{\lambda}^{p\,(n)}=
|\lambda^{(n)}|\Theta^{(k_n)}(\pi,\lambda)^*\tilde{\lambda}^{p\,(n)}=
\frac{\Theta^{(k_n)}(\pi,\lambda)^*\tilde{\lambda}^{p\,(n)}}{|\Theta^{(k_n)}(\pi,\lambda)^*\tilde{\lambda}^{(n)}|}.\]
Since $\nu(\Theta^{(k_n)}(\pi,\lambda)^*)\leq \Gamma$, by
(\ref{nierlip}),
\[\|\lambda^{b\,(n)}-\lambda\|=\left\|\frac{\Theta^{(k_n)}(\pi,\lambda)^*\tilde{\lambda}^{p\,(n)}}{|\Theta^{(k_n)}(\pi,\lambda)^*\tilde{\lambda}^{(n)}|}-
\frac{\Theta^{(k_n)}(\pi,\lambda)^*\tilde{\lambda}^{(n)}}{|\Theta^{(k_n)}(\pi,\lambda)^*\tilde{\lambda}^{(n)}|}\right\|\leq
\Gamma^2\|\tilde{\lambda}^{p\,(n)}-\tilde{\lambda}^{(n)}\|\leq\frac{2\Gamma^2}{n}.
\]
Moreover, by Remark~\ref{powrot},
\[\hat{R}^{k_n}(\pi,\lambda^{b\,(n)})=(\bar{\pi},\Theta^{(k_n)}(\pi,\lambda)^{-1*}\lambda^{b\,(n)})=
(\bar{\pi},{\lambda}^{p\,(n)})\] and
\[\hat{\mathcal{R}}^{k_n}(\pi,\lambda^{b\,(n)},\tau)=(\bar{\pi},\Theta^{(k_n)}(\pi,\lambda)^{-1*}\lambda^{b\,(n)},\Theta^{(k_n)}(\pi,\lambda)^{-1*}\tau)=
(\bar{\pi},{\lambda}^{p\,(n)},\tau^{(n)}).\] Hence
$M(\bar{\pi},\lambda^{p\,(n)},\tau^{(n)})=M(\pi,\lambda^{b\,(n)},\tau)$.
In view of (\ref{obrot}), it follows that
\[M(\bar{\pi},\lambda^{r\,(n)},\tau^{r\,(n)})=\mathcal{U}_{\theta_n}M(\bar{\pi},\lambda^{p\,(n)},\tau^{(n)})=
\mathcal{U}_{\theta_n}M(\pi,\lambda^{b\,(n)},\tau).\] Since
$\|\lambda^{b\,(n)}-\lambda\|<2/n$ and $\theta_n\to 0$, by the
continuity of the map $M$ (see Proposition~\ref{contm}) and the
flow $(\mathcal{U}_s)_{s\in\R}$, it follows that
$M(\bar{\pi},\lambda^{r\,(n)},\tau^{r\,(n)})\to
M(\pi,\lambda,\tau)$ in the moduli space for every
$(\pi,\lambda,\tau)\in B\subset \mathcal{H}(C)$. Furthermore, for
every real $s>0$ we have
$M(\bar{\pi},s\lambda^{r\,(n)},\tau^{r\,(n)})\to
M({\pi},s\lambda,\tau)$.

Let
$\widetilde{B}=\{({\pi},s\lambda,\tau)\in\hat{\mathcal{H}}(C):(\pi,\lambda,\tau)\in
B \}$. Since  the topological support of $m$ is $\mathcal{H}(C)$
and $m(B^c)=0$, the set $B$ is dense in $\mathcal{H}(C)$, and
hence $\widetilde{B}$ is dense in $\hat{\mathcal{H}}(C)$. As
$(\bar{\pi},s\lambda^{r\,(n)},\tau^{r\,(n)})\in Z(\bar{\pi})$, it
follows that $M(Z(\bar{\pi}))$ is dense in
$M(\hat{\mathcal{H}}(C))$.
\end{proof}

\begin{lemma}\label{niezaleznosc}
Suppose that $\mathcal{A}=\{1,\ldots,d\}$ with $d\geq 4$ and
$\bar{\pi}\in\mathcal{P}^*_{\mathcal{A}}$ is a standard pair such
that $\bar{\pi}_0=id$. Assume that $\tau_1,\ldots,\tau_d$ are
independent over an additive subgroup $A\subset\R$. Let
$h=-\Omega_{\overline{\pi}} \tau$. Then $h_{d-2},h_{d-1},h_d$ are
also independent over $A$.
\end{lemma}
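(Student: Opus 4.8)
The plan is to reduce the statement to a coordinate computation with the vector $h=-\Omega_{\bar\pi}\tau$. First unwind the hypotheses: $\bar\pi_0=id$ means $\bar\pi_0(\alpha)=\alpha$, standardness of $\bar\pi$ forces $\bar\pi_1(1)=d$ and $\bar\pi_1(d)=1$, and $\bar\pi\in\mathcal{P}^*_{\mathcal{A}}$ gives $\bar\pi_1(k+1)\neq\bar\pi_1(k)+1$ for $1\le k<d$. Put $p=\bar\pi_1(d-1)$ and $q=\bar\pi_1(d-2)$; using $d\ge 4$ and the above, $p,q\in\{2,\ldots,d-1\}$, $p\neq q$, and (the case $k=d-2$) $p\neq q+1$. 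Set $H_j=\sum_{m=1}^{j}\tau_{\bar\pi_1^{-1}(m)}$ for $0\le j\le d$, with $H_0=0$. The key preliminary identity I would record is
\[
h_\alpha=\tau_1+\cdots+\tau_{\alpha-1}-H_{\bar\pi_1(\alpha)-1}\qquad(\alpha\in\mathcal{A}),
\]
obtained by splitting $h_\alpha=-\sum_{\beta}\Omega_{\alpha\,\beta}\tau_\beta$ according to the sign of $\Omega_{\alpha\,\beta}$ --- i.e.\ according to whether $\bar\pi_1(\beta)<\bar\pi_1(\alpha)$ and $\beta>\alpha$, or $\bar\pi_1(\beta)>\bar\pi_1(\alpha)$ and $\beta<\alpha$ --- and rewriting the two partial sums in terms of $\tau_1+\cdots+\tau_{\alpha-1}$ and $H_{\bar\pi_1(\alpha)-1}$, the overlapping terms cancelling. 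In particular
\[
h_d=\tau_1+\cdots+\tau_{d-1},\quad
h_{d-1}=\tau_1+\cdots+\tau_{d-2}-H_{p-1},\quad
h_{d-2}=\tau_1+\cdots+\tau_{d-3}-H_{q-1}.
\]

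Now suppose $a_{d-2}h_{d-2}+a_{d-1}h_{d-1}+a_dh_d=0$ with $a_{d-2},a_{d-1},a_d\in A$. Expanding the left side in the coordinates $\tau_1,\ldots,\tau_d$ gives $\sum_{j}c_j\tau_j$ where each $c_j$ is an integer combination of $a_{d-2},a_{d-1},a_d$, hence $c_j\in A$; since $\tau_1,\ldots,\tau_d$ are independent over $A$, every $c_j=0$. The coefficient of $\tau_d=\tau_{\bar\pi_1^{-1}(1)}$ equals $-(a_{d-2}+a_{d-1})$: indeed $\tau_d$ appears in none of the sums $\tau_1+\cdots+\tau_{\alpha-1}$ and in both $H_{p-1}$ and $H_{q-1}$ because $p,q\ge 2$. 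Hence $a_{d-1}=-a_{d-2}$. The coefficient of $\tau_1=\tau_{\bar\pi_1^{-1}(d)}$ equals $a_{d-2}+a_{d-1}+a_d$: indeed $\tau_1$ appears in none of $H_{p-1},H_{q-1}$ (their top index is $\le d-1<d$) but in each of $\tau_1+\cdots+\tau_{d-1}$, $\tau_1+\cdots+\tau_{d-2}$ and $\tau_1+\cdots+\tau_{d-3}$, where $d\ge 4$ is used so that $1\le d-3$. Therefore $a_d=0$.

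With $a_d=0$ and $a_{d-1}=-a_{d-2}$ the relation collapses to $a_{d-1}(h_{d-1}-h_{d-2})=0$, so it remains to exhibit one coordinate $\tau_j$ whose coefficient in $h_{d-1}-h_{d-2}$ is nonzero; that forces $a_{d-1}=0$ and then $a_{d-2}=0$. Compute $h_{d-1}-h_{d-2}=\tau_{d-2}+H_{q-1}-H_{p-1}$. If $q>p$, then $H_{q-1}-H_{p-1}=\sum_{m=p}^{q-1}\tau_{\bar\pi_1^{-1}(m)}$, which does not contain $\tau_{d-2}$ (that would require $\bar\pi_1(d-2)=q\le q-1$), so $\tau_{d-2}$ has coefficient $1$ in $h_{d-1}-h_{d-2}$. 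If $q<p$, then $H_{q-1}-H_{p-1}=-\sum_{m=q}^{p-1}\tau_{\bar\pi_1^{-1}(m)}$, whose $m=q$ term is $\tau_{\bar\pi_1^{-1}(q)}=\tau_{d-2}$ and cancels the leading $\tau_{d-2}$, leaving $h_{d-1}-h_{d-2}=-\sum_{m=q+1}^{p-1}\tau_{\bar\pi_1^{-1}(m)}$; this sum is nonempty exactly because $p\neq q+1$ (together with $q<p$), so some $\tau_j$ occurs with coefficient $-1$. Either way $a_{d-1}=0$, so $a_{d-2}=0$ as well, which proves the independence of $h_{d-2},h_{d-1},h_d$ over $A$.

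The main obstacle is this last step: one must rule out that $h_{d-1}$ and $h_{d-2}$ coincide, and this is precisely where the hypothesis $\bar\pi\in\mathcal{P}^*_{\mathcal{A}}$ is essential --- were $p=q+1$ allowed, one would get $h_{d-1}=h_{d-2}$ and the three heights would not be independent. The only other point requiring care is pinning down the height formula above, i.e.\ keeping track of the sign conventions in $\Omega_{\bar\pi}$; beyond that the argument is routine coefficient bookkeeping resting on $\bar\pi_1(1)=d$, $\bar\pi_1(d)=1$ and $d\ge 4$.
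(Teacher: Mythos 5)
Your proof is correct and follows essentially the same route as the paper's: both expand $h_{d-2},h_{d-1},h_d$ in the coordinates $\tau_1,\ldots,\tau_d$, read off the coefficients of $\tau_1=\tau_{\bar\pi_1^{-1}(d)}$ and $\tau_d=\tau_{\bar\pi_1^{-1}(1)}$ to get $a_{d-2}+a_{d-1}+a_d=0$ and $a_{d-2}+a_{d-1}=0$, and then use the $\mathcal{P}^*$ condition to produce a coordinate in which $h_{d-2}$ and $h_{d-1}$ differ. Your explicit case analysis of $h_{d-1}-h_{d-2}=\tau_{d-2}+H_{q-1}-H_{p-1}$ is just a spelled-out version of the paper's choice of an index $s$ with $\Omega_{s\,(d-2)}\neq\Omega_{s\,(d-1)}$ (and in fact treats the case $q=p+1$, where that index is $s=d-2$, more transparently than the paper does).
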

\begin{proof}Suppose that $a_1h_{d-2}+a_2h_{d-1}+a_3h_d=0$ and $a_1,a_2,a_3\in A$. Since
$\bar{\pi}\in\mathcal{P}^{*}_{\mathcal{A}}$, we have
$\bar{\pi}_1(d-1)\neq\bar{\pi}_1(d-2)+1$. Hence there exists
$1<s<d-1$ such that $\Omega_{s\,(d-2)}\neq \Omega_{s\,(d-1)}$.
Since $\overline{\pi}$ is a standard pair,
\begin{eqnarray*}h_{d-2}&=&\tau_1+\ldots+\Omega_{s\,(d-2)}\tau_s+\ldots-\tau_d\\
h_{d-1}&=&\tau_1+\ldots+\Omega_{s\,(d-1)}\tau_s+\ldots-\tau_d\\
h_{d}&=&\tau_1+\ldots+\tau_s+\ldots+\tau_{d-1},
\end{eqnarray*}
and hence
\[(a_1+a_2+a_3)\tau_1+\ldots+(a_1\Omega_{s\,(d-2)}+a_2\Omega_{s\,(d-1)}+a_3)\tau_s+\ldots+(-a_1 -a_2)\tau_d=0.\]
Therefore
\[a_1+a_2+a_3=a_1\Omega_{s\,(d-2)}+a_2\Omega_{s\,(d-1)}+a_3=a_1+a_2=0.\]
Since $\Omega_{s\,(d-2)}\neq \Omega_{s\,(d-1)}$, it follows that
 $a_1=a_2=a_3=0$.
\end{proof}
\begin{theorem}
\label{milddensegen} If $g\geq 2$ then for every stratum
$\mathcal{H}_g(m_1,\ldots,m_{\kappa})$ there exists a dense subset
 $\mathcal{H}_{mm}\subset\mathcal{H}_g(m_1,\ldots,m_{\kappa})$ such that for every
$(M,\omega)\in\mathcal{H}_{mm}$ its vertical flow is mildly
mixing.
\end{theorem}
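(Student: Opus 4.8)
The strategy is to descend to three-interval exchange transformations by means of Lemma~\ref{prostogen}, to make the mild-mixing criterion of Corollary~\ref{wniosek} applicable there after a small perturbation of the suspension data, and then to carry density back up to the moduli space by continuity of the suspension construction. So fix a stratum $\mathcal{H}_g(m_1,\ldots,m_\kappa)$ with $g\ge 2$. All multiplicities $m_i$ are positive, so Proposition~\ref{realizacja} and Remark~\ref{gestpow} apply with $\mathcal{A}=\{1,\ldots,N\}$, $N=2g+\kappa-1\ge 4$, and the set $\bigcup_{\pi\in\mathcal{P}^*_{\mathcal{A}}}M(\hat{\mathcal{H}}(\pi))$ has full $\nu$-measure in the stratum, hence is dense in it. Since there are only finitely many Rauzy classes on $\mathcal{A}$ and $M(\hat{\mathcal{H}}(\pi))\subset M(\hat{\mathcal{H}}(C))$ whenever $\pi\in C$, it suffices to prove that $M(\hat{\mathcal{H}}(C))\subset\overline{\mathcal{H}_{mm}}$ for every Rauzy class $C$ meeting $\mathcal{P}^*_{\mathcal{A}}$; then $\overline{\mathcal{H}_{mm}}$ contains the dense set $\bigcup_{\pi\in\mathcal{P}^*_{\mathcal{A}}}M(\hat{\mathcal{H}}(\pi))$ and therefore the whole stratum. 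Fix such a $C$ and, using Proposition~\ref{Rauzy}, choose in $C$ a standard pair $\bar\pi$ lying in $\mathcal{P}^*_{\mathcal{A}}$; relabelling $\mathcal{A}$ by $\bar\pi_0$ we may assume $\bar\pi_0=\operatorname{id}$. By Lemma~\ref{prostogen}, $M(Z(\bar\pi))$ is dense in $M(\hat{\mathcal{H}}(C))$, so it is enough to show $M(Z(\bar\pi))\subset\overline{\mathcal{H}_{mm}}$.

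On $Z(\bar\pi)$ only the lengths $\lambda_{N-2},\lambda_{N-1},\lambda_N$ are nonzero, so, after rescaling the base to unit length, $T_{(\bar\pi,\lambda)}$ is a three-interval exchange $T_{(\pi^*,\lambda)}$ whose permutation $\pi^*$ is the restriction of $\bar\pi$ to $\{N-2,N-1,N\}$ --- one of the irreducible three-letter permutations $\pi_s,\pi_r,\pi_l$ --- and by Lemma~\ref{spec} the vertical flow of $M(\bar\pi,\lambda,\tau)$ is isomorphic to the special flow over $T_{(\pi^*,\lambda)}$ under the roof function taking the three values $h_{N-2},h_{N-1},h_N$, where $h=-\Omega_{\bar\pi}\tau$. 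Normalising the three nonzero lengths so that $|\lambda|=1$ before applying $\rho$ and $\alpha$, call $(\bar\pi,\lambda,\tau)\in Z(\bar\pi)$ \emph{good} if $\rho(\pi^*,\lambda)\in\mathfrak{M}$ and $\tau_1,\ldots,\tau_N$ are independent over $\Q+\Q\alpha(\pi^*,\lambda)$. The good parameters are dense in $Z(\bar\pi)$: given $(\bar\pi,\lambda^0,\tau^0)\in Z(\bar\pi)$, first use that each $\rho_\gamma$ is a diffeomorphism onto an open subset of $[0,1]^2$ and that $\mathfrak{M}$ is dense in $[0,1)^2$ (Remark~\ref{uwagagest}) to pick $\lambda$ near $\lambda^0$ with the same vanishing pattern and $\rho(\pi^*,\lambda)\in\mathfrak{M}$; this fixes an irrational $\alpha=\alpha(\pi^*,\lambda)\in DC_1$. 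Then use that the $N$-tuples independent over $\Q+\Q\alpha$ form a dense $G_\delta$ in $\R^N$ (Remark~\ref{uwagagest}), together with the fact that $\mathcal{T}^+_{\bar\pi,\lambda}$ is a fixed nonempty open cone (conditions (\ref{dodat}) and (\ref{dodatla}) depend only on which coordinates of $\lambda$ vanish), to pick $\tau\in\mathcal{T}^+_{\bar\pi,\lambda}$ near $\tau^0$ with $\tau_1,\ldots,\tau_N$ independent over $\Q+\Q\alpha$. Then $(\bar\pi,\lambda,\tau)\in Z(\bar\pi)$ is good and close to $(\bar\pi,\lambda^0,\tau^0)$.

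Now let $(\bar\pi,\lambda,\tau)\in Z(\bar\pi)$ be good and set $\alpha=\alpha(\pi^*,\lambda)$, $h=-\Omega_{\bar\pi}\tau$. Since $\bar\pi\in\mathcal{P}^*_{\mathcal{A}}$ is a standard pair with $\bar\pi_0=\operatorname{id}$ and $\tau_1,\ldots,\tau_N$ are independent over the additive subgroup $\Q+\Q\alpha$ of $\R$, Lemma~\ref{niezaleznosc} shows that $h_{N-2},h_{N-1},h_N$ are independent over $\Q+\Q\alpha$; as $\rho(\pi^*,\lambda)\in\mathfrak{M}$, Corollary~\ref{wniosek} then shows that the special flow over $T_{(\pi^*,\lambda)}$ under the roof with values $h_{N-2},h_{N-1},h_N$ is mildly mixing, and hence, by Lemma~\ref{spec}, so is the vertical flow of $M(\bar\pi,\lambda,\tau)$; that is, $M(\bar\pi,\lambda,\tau)\in\mathcal{H}_{mm}$. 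Finally, the polygon $P(\bar\pi,\lambda,\tau)$, and with it the point $M(\bar\pi,\lambda,\tau)$ of $\mathcal{H}_g$, depends continuously on $(\lambda,\tau)$ over $Z(\bar\pi)$ (this is the continuity of Proposition~\ref{contm}, extended to the face where $\lambda_1=\ldots=\lambda_{N-3}=0$), so the $M$-image of the dense set of good parameters is dense in $M(Z(\bar\pi))$; therefore $M(Z(\bar\pi))\subset\overline{\mathcal{H}_{mm}}$, and by the reduction of the first paragraph $\mathcal{H}_{mm}$ is dense in the stratum. The real work is done by the results quoted --- Lemma~\ref{prostogen} (the descent to three intervals, with control of convergence in $\mathcal{H}_g$) and the mild-mixing theorem of \cite{Fr-Le-Le} behind Proposition~\ref{mild} and Corollary~\ref{wniosek}; in the assembly above the only point requiring care is that the two arithmetic adjustments --- of $\lambda$ so as to land in $\mathfrak{M}$, and then of $\tau$ so as to be independent over the resulting $\Q+\Q\alpha$ --- must be carried out in this order and kept inside $Z(\bar\pi)$.
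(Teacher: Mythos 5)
Your proof is correct and follows essentially the same route as the paper's: reduce via Proposition~\ref{realizacja}, Remark~\ref{gestpow}, Proposition~\ref{Rauzy} and Lemma~\ref{prostogen} to the set $Z(\bar\pi)$, perturb first $\lambda$ (to land $\rho$ in $\mathfrak{M}$) and then $\tau$ (to get independence over $\Q+\Q\alpha$), and conclude with Lemma~\ref{niezaleznosc} and Corollary~\ref{wniosek}. The only cosmetic differences are that the paper makes explicit the harmless normalizations ($\lambda_{d-2},\lambda_{d-1},\lambda_d$ positive, $\lambda_{d-2}\neq\lambda_d$) that you handle implicitly by density, and that the restricted permutation is in fact only $\pi_r$ or $\pi_s$ (never $\pi_l$) since $\bar\pi$ is standard.
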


\begin{proof}
By Proposition~\ref{realizacja} and Remark~\ref{gestpow}, there
exists a finite family $\mathcal{C}$ of Rauzy classes in
$\mathcal{P}^*_{\mathcal{A}}$ ($\#\mathcal{A}=d=2g+\kappa-1\geq
4$) such that $\bigcup_{C\in\mathcal{C}}M(\hat{\mathcal{H}}(C))$
is dense in $\mathcal{H}_g(m_1,\ldots,m_{\kappa})$.

Let $\mathcal{A}=\{1,\ldots,d\}$. In view of
Proposition~\ref{Rauzy} and Lemma~\ref{prostogen}, it suffices to
show that for every $\bar{\pi}$  standard pair in $C$ such that
$\bar{\pi}_0=id$ and for every $(\bar{\pi},\lambda,\tau)\in
Z(\bar{\pi})$ there exists a sequence
$\{(\bar{\pi},\lambda^n,\tau^n)\}_{n\in\N}$ in $Z(\overline{\pi})$
such that $(\lambda^n,\tau^n)\to (\lambda,\tau)$ and the vertical
flow for $M(\bar{\pi},\lambda^n,\tau^n)$ is mildly mixing. Without
loss of generality we can assume that $|\lambda|=1$. Moreover, we
can also assume that $\lambda_{d-2}$,
$\lambda_{d-1}$,$\lambda_{d}$ are positive and
$\lambda_{d-2}\neq\lambda_d$, because the set of all
$(\bar{\pi},\lambda,\tau)\in Z(\bar{\pi})$ satisfying this
condition is dense in $Z(\bar{\pi})$.

Suppose that $(\bar{\pi},\lambda,\tau)$ is an element of
$Z(\bar{\pi})$ such that $\lambda_{d-2}$,
$\lambda_{d-1}$,$\lambda_{d}$ are positive and
$\lambda_{d-2}\neq\lambda_d$. Let
$h=h(\tau)=-\Omega_{\overline{\pi}} \tau$. Since
$(\bar{\pi},\lambda,\tau)\in Z(\bar{\pi})$ and $\overline{\pi}$ is
a standard pair, by Lemma~\ref{spec}, the vertical flow for
$M(\bar{\pi},\lambda,\tau)$
 is isomorphic to the special flow
$T_{(\tilde{\pi},\tilde{\lambda})}^{f_{\tilde{h}}}$, where
$T_{(\tilde{\pi},\tilde{\lambda})}$ is an exchange on three
intervals such that $\tilde{\pi}\in \mathcal{P}^0_{\{d-2,d-1,d\}}$
is equal to
$$\pi_r=\left(\begin{array}{cccc}d-2&d-1&d\\d&d-2&d-1\end{array}\right)\text{
or }
\pi_s=\left(\begin{array}{cccc}d-2&d-1&d\\d&d-1&d-2\end{array}\right),$$
$\tilde{\lambda}=(\lambda_{d-2},\lambda_{d-1},\lambda_d)\in\Lambda_{\{d-2,d-1,d\}}$
and $f_{\tilde{h}}$ is determined by
$\tilde{h}=(h_{d-2},h_{d-1},h_d)$. Let
$\gamma=\gamma(\tilde{\pi},\tilde{\lambda})$. Since
$\rho_\gamma:\Lambda^\gamma_{\{d-2,d-1,d\}}\to
\rho_\gamma(\Lambda^\gamma_{\{d-2,d-1,d\}})\subset[0,1]^2$ is a
diffeomorphism and $\mathfrak{M}$ is  dense in $[0,1]^2$, we can
find a sequence
$\{(\lambda^n_{d-2},\lambda^n_{d-1},\lambda^n_d)\}_{n\in\N}$ in
$\Lambda^\gamma_{\{d-2,d-1,d\}}$ such that
$$(\lambda^n_{d-2},\lambda^n_{d-1},\lambda^n_d)\to\tilde{\lambda}
\text{ and }
\rho(\tilde{\pi},(\lambda^n_{d-2},\lambda^n_{d-1},\lambda^n_d))=\rho_\gamma(\lambda^n_{d-2},\lambda^n_{d-1},\lambda^n_d)\in
\mathfrak{M}.$$ Setting
$\lambda^n=(0,\ldots,0,\lambda^n_{d-2},\lambda^n_{d-1},\lambda^n_d)\in\Lambda_\mathcal{A}$,
we have
$\widetilde{\lambda}^n=(\lambda^n_{d-2},\lambda^n_{d-1},\lambda^n_d)$
and $\lambda^n\to\lambda$. Since
$\mathcal{T}^+_{\overline{\pi},\lambda^n}$ is open, there exists a
sequence $\{\tau^n\}_{n\in\N}$ such that
$\tau^n\in\mathcal{T}^+_{\overline{\pi},\lambda^n}$,
$\tau^n\to\tau$ and $\tau^n_1,\ldots,\tau^n_d$ are independent
over $\Q+\Q\alpha(\tilde{\pi},\widetilde{\lambda}^n)$. In view of
Lemma~\ref{niezaleznosc}, $h_{d-2}(\tau^n)$, $ h_{d-2}(\tau^n)$,
$h_d(\tau^n)$ are also independent over
$\Q+\Q\alpha(\tilde{\pi},\tilde{\lambda}^n)$. By
Corollary~\ref{wniosek}, it follows that
$T_{(\tilde{\pi},\tilde{\lambda}^n)}^{f_{\tilde{h}(\tau^n)}}$ is
mildly mixing. Consequently, the vertical flow of
$M(\overline{\pi},\lambda^n,\tau^n)$ is also mildly mixing. As
$(\overline{\pi},\lambda^n,\tau^n)\in Z(\overline{\pi})$ and
$(\lambda^n,\tau^n)\to(\lambda,\tau)$, the theorem follows.
\end{proof}
\begin{corollary} If $g\geq 2$ then the set of Abelian
differentials in $\mathcal{H}_g$ for which the vertical flow is
mildly mixing is dense in $\mathcal{H}_g$.
\end{corollary}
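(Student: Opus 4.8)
The plan is to reduce the corollary to Theorem~\ref{milddensegen} via the stratification of $\mathcal{H}_g$. First I would record that $\mathcal{H}_g$ is the union of its strata, $\mathcal{H}_g=\bigcup\mathcal{H}_g(m_1,\ldots,m_\kappa)$, where the union runs over all finite collections of positive integers with $m_1+\ldots+m_\kappa=2g-2$ (a finite index set for fixed $g$). This union is genuinely all of $\mathcal{H}_g$, because every nonzero holomorphic $1$-form on a genus-$g$ surface has finitely many zeros, of positive multiplicities, whose sum is $2g-2$ by the Euler--Poincar\'e formula.

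Next, for each such tuple Theorem~\ref{milddensegen} furnishes a subset $\mathcal{H}_{mm}(m_1,\ldots,m_\kappa)\subset\mathcal{H}_g(m_1,\ldots,m_\kappa)$ that is dense in the stratum and whose members all have mildly mixing vertical flow. I would then set $\mathcal{H}_{mm}=\bigcup\mathcal{H}_{mm}(m_1,\ldots,m_\kappa)$. By construction every $(M,\omega)\in\mathcal{H}_{mm}$ has mildly mixing vertical flow, so it remains only to verify that $\mathcal{H}_{mm}$ is dense in $\mathcal{H}_g$.

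For density, take any $(M,\omega)\in\mathcal{H}_g$ and any open $U\subset\mathcal{H}_g$ containing it. Then $(M,\omega)$ lies in one of the strata $\mathcal{H}_g(m_1,\ldots,m_\kappa)$, and $U\cap\mathcal{H}_g(m_1,\ldots,m_\kappa)$ is a nonempty subset of that stratum which is open in its subspace topology; since $\mathcal{H}_{mm}(m_1,\ldots,m_\kappa)$ is dense in the stratum, it meets $U\cap\mathcal{H}_g(m_1,\ldots,m_\kappa)$, producing a point of $\mathcal{H}_{mm}$ inside $U$. I do not anticipate any real obstacle here: the only thing to be careful about is that density within each stratum transfers to density in the union precisely because the strata cover $\mathcal{H}_g$ and one uses the subspace topology, so that no individual stratum needs to be open or dense in $\mathcal{H}_g$. (In fact the principal stratum $\mathcal{H}_g(1,\ldots,1)$ is open and dense in $\mathcal{H}_g$, which yields an even quicker argument: $\mathcal{H}_{mm}(1,\ldots,1)$ is dense in an open dense subset of $\mathcal{H}_g$, hence dense in $\mathcal{H}_g$.)
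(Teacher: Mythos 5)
Your argument is correct and is exactly the (implicit) deduction the paper intends: the corollary is stated without proof as an immediate consequence of Theorem~\ref{milddensegen}, since the strata $\mathcal{H}_g(m_1,\ldots,m_\kappa)$ with $m_1+\cdots+m_\kappa=2g-2$ cover $\mathcal{H}_g$ and density in each stratum (in its subspace topology) passes to density in the union. Your careful handling of the subspace-topology point, and the observation that no stratum needs to be open or dense in $\mathcal{H}_g$, is exactly the right justification.
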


\section{Measure-theoretical equivalence of Abelian differentials and some orbits of the Teichm\"uller flow}
\begin{definition}
Two Abelian differentials $(M,\omega)$ and  $(M',\omega')$ are
measure-theoretical isomorphic if there exists a
measure-preserving invertible map $\psi:(M,\omega)\to(M',\omega')$
such that
$\psi\circ\mathcal{F}_s^{\omega,\theta}=\mathcal{F}_s^{\omega',\theta}\circ\psi$
for every $\theta\in S^1$ and $s\in\R$.
\end{definition}

For every stratum $\mathcal{H}_g(m_1,\ldots,m_\kappa)$ let
$(\mathcal{T}_t)_{t\in\R}$ stand for the Teichm\"uller geodesic
flow on $\mathcal{H}_g(m_1,\ldots,m_\kappa)$. As a consequence of
results from previous sections we obtain the following.

\begin{theorem}
If $g\geq 2$ then there exists a dense subset
$\mathcal{H}'\subset\mathcal{H}_g(m_1,\ldots,m_\kappa)$ such that
for every $(M,\omega)\in\mathcal{H}'$ the Abelian differentials
$(M,\omega)$ and $\mathcal{T}_s(M,\omega)$ are not
measure-theoretically equivalent for every real $s\neq 0$.
\end{theorem}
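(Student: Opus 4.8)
The plan is to rerun the argument of Theorem~\ref{milddensegen}, but to exploit the \emph{Moreover} part of Proposition~\ref{mild} in place of mild mixing, after first identifying the vertical flow of $\mathcal{T}_s(M,\omega)$ with a time change of the vertical flow of $(M,\omega)$. The basic observation is that in the polygonal model the Teichm\"uller geodesic flow acts by $\mathcal{T}_sM(\pi,\lambda,\tau)=M(\pi,e^s\lambda,e^{-s}\tau)=M(\hat{\mathcal{T}}_s(\pi,\lambda,\tau))$, since the coordinates $\lambda$ are the real parts and $\tau$ the imaginary parts of the sides $\zeta_\alpha=\lambda_\alpha+i\tau_\alpha$. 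By Lemma~\ref{spec}, the vertical flow of $M(\pi,e^s\lambda,e^{-s}\tau)$ is the special flow over $T_{(\pi,e^s\lambda)}$ under $f_{e^{-s}h}$, where $h=-\Omega_\pi\tau$. Conjugating the base by the homothety $x\mapsto e^{-s}x$ identifies $T_{(\pi,e^s\lambda)}$ with $T_{(\pi,\lambda)}$ and carries the roof to $e^{-s}f_h$; and for any special flow $\mathcal{S}=T^f$ and any $c>0$ one has $T^{cf}\cong\mathcal{S}^{1/c}$ (rescale the fibre coordinate by $c$). Hence the vertical flow of $\mathcal{T}_s(M,\omega)$ is isomorphic to $\mathcal{F}^{e^s}$, where $\mathcal{F}$ denotes the vertical flow of $(M,\omega)$. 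Since the definition of measure--theoretic equivalence includes, by taking $\theta=i$, an isomorphism intertwining the two vertical flows, it follows that if $(M,\omega)$ and $\mathcal{T}_s(M,\omega)$ are measure--theoretically equivalent then $\mathcal{F}$ and $\mathcal{F}^{e^s}$ are isomorphic measure--preserving flows.

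Next I would reproduce the construction of the dense set from the proof of Theorem~\ref{milddensegen}. Using Proposition~\ref{realizacja}, Remark~\ref{gestpow}, Proposition~\ref{Rauzy} and Lemma~\ref{prostogen}, the problem reduces to producing, near a dense set of $(\bar\pi,\lambda,\tau)\in Z(\bar\pi)$ with $\lambda_{d-2},\lambda_{d-1},\lambda_d>0$ and $\lambda_{d-2}\neq\lambda_d$, points $(\bar\pi,\lambda^n,\tau^n)\in Z(\bar\pi)$ whose vertical flow has no nontrivial self--similarity. As in that proof, by Lemma~\ref{spec} the vertical flow of $M(\bar\pi,\lambda^n,\tau^n)$ is isomorphic to a step special flow $T_{(\tilde\pi,\tilde\lambda^n)}^{f_{\tilde h(\tau^n)}}$ over an exchange of three intervals; choosing $\tilde\lambda^n\to\tilde\lambda$ with $\rho(\tilde\pi,\tilde\lambda^n)\in\mathfrak{M}$ and $\tau^n\to\tau$ in the open cone $\mathcal{T}^+_{\bar\pi,\lambda^n}$ with $\tau^n_1,\dots,\tau^n_d$ independent over $\Q+\Q\alpha(\tilde\pi,\tilde\lambda^n)$ puts us, via Lemma~\ref{niezaleznosc}, in the setting of Corollary~\ref{wniosek}. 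The key point is that every isomorphism used in the proof of Corollary~\ref{wniosek} --- those supplied by Lemma~\ref{przejpot} and the reduction to the rotation case --- is a genuine measure--preserving isomorphism of flows, so the \emph{Moreover} clause of Proposition~\ref{mild} transports along it: the vertical flow $\mathcal{F}$ of $M(\bar\pi,\lambda^n,\tau^n)$ satisfies $\mathcal{F}\not\cong\mathcal{F}^{t}$ for all $t>0$ with $t\neq1$. Let $\mathcal{H}'$ collect all such $M(\bar\pi,\lambda^n,\tau^n)$ over the finite family of Rauzy classes furnished by Proposition~\ref{realizacja}; the density argument of Theorem~\ref{milddensegen} shows $\mathcal{H}'$ is dense in the stratum.

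Combining the two steps finishes the proof: for $(M,\omega)\in\mathcal{H}'$ and any $s\neq0$ we have $e^s>0$ and $e^s\neq1$, so the vertical flow of $\mathcal{T}_s(M,\omega)$ is isomorphic to $\mathcal{F}^{e^s}$, which by the previous paragraph is \emph{not} isomorphic to $\mathcal{F}$; hence $(M,\omega)$ and $\mathcal{T}_s(M,\omega)$ are not measure--theoretically equivalent. The only part requiring genuine care is the first paragraph: one must verify that the normalization in the Rauzy--Veech picture, the homothety on the base interval, and the rescaling of the fibre coordinate behind $T^{cf}\cong\mathcal{S}^{1/c}$ truly compose to the time change by $e^s$ (note that only positive time changes $\mathcal{F}^{e^s}$ ever occur, so one never needs the statement for negative parameters). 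Everything else is a line--by--line rerun of the proof of Theorem~\ref{milddensegen}.
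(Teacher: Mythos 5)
Your proposal is correct and follows essentially the same route as the paper: the paper likewise identifies the vertical flow of $\mathcal{T}_s(M,\omega)$ with $\mathcal{F}^{e^s}$ via $M(\pi,e^s\lambda,e^{-s}\tau)$ and the map $(x,y)\mapsto(e^{-s}x,e^sy)$, and likewise obtains the dense set $\mathcal{H}'$ by rerunning the proof of Theorem~\ref{milddensegen} with the \emph{Moreover} clause of Proposition~\ref{mild} in place of mild mixing. Your write-up merely makes explicit the transport of the non-self-similarity through the isomorphisms of Corollary~\ref{wniosek}, which the paper leaves implicit.
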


\begin{proof}
By Proposition~\ref{mild} and the proof of
Theorem~\ref{milddensegen}, there exists a dense subset
$\mathcal{H}'\subset\mathcal{H}_g(m_1,\ldots,m_\kappa)$ such that
for every $(M,\omega)\in\mathcal{H}'$ if $\mathcal{F}$ stands for
its vertical flow then the flows $\mathcal{F}^t$ and $\mathcal{F}$
are not isomorphic for every positive $t\neq 1$. Moreover, every
element of $\mathcal{H}'$ can be represented in the form
$M(\pi,\lambda,\tau)$. Fix $(M,\omega)\in\mathcal{H}'$ and real
$s\neq 0$. By $\tilde{\mathcal{F}}$ denote the vertical flow for
$\mathcal{T}_s(M,\omega)$. Let
$(\pi,\lambda,\tau)\in\hat{\mathcal{H}}(C)$ be a triple such that
$M(\pi,\lambda,\tau)=(M,\omega)$. Then
\[\mathcal{T}_s(M,\omega)=\mathcal{T}_sM(\pi,\lambda,\tau)=M(\mathcal{T}_s(\pi,\lambda,\tau))=M(\pi,e^s\lambda,e^{-s}\tau).\]
It follows that $\tilde{\mathcal{F}}$ is isomorphic to the special
flow
$T_{(\pi,e^s\lambda)}^{f_{h(e^{-s}\tau)}}=T_{(\pi,e^s\lambda)}^{e^{-s}f_{h(\tau)}}$.
Moreover, $T_{(\pi,e^s\lambda)}^{e^{-s}f_{h(\tau)}}$ is isomorphic
to $\left(T_{(\pi,\lambda)}^{f_{h(\tau)}}\right)^{e^s}$ via the
map $(x,y)\mapsto(e^{-s}x,e^sy)$. It follows that
$\tilde{\mathcal{F}}$ is isomorphic to ${\mathcal{F}}^{e^s}$.
Therefore $\tilde{\mathcal{F}}$ is not isomorphic to
$\mathcal{F}$.
\end{proof}
\section*{Acknowledgements} The author would like to thank M. Lema\'nczyk for inspiration,
E. Gutkin and the referee for their comments which made the text
more readable.

\end{document}